\tikzset{
vtx/.style={inner sep=2pt, outer sep=0pt, circle, fill=black,draw=black}
}
\numberwithin{equation}{section}
\numberwithin{figure}{section}
\setlist{nosep}
\setlist[enumerate,1]{label = (\alph*),
                      ref   = \alph*}
\theoremstyle{plain}
\newtheorem{thm}{\protect\theoremname}
\theoremstyle{plain}
\newtheorem{conjecture}[thm]{\protect\conjecturename}
\theoremstyle{plain}
\newtheorem{corollary}[thm]{\protect\corollaryname}
\theoremstyle{plain}
\newtheorem{lem}[thm]{\protect\lemmaname}
\newlist{Oinline}{enumerate*}{1}
\setlist[Oinline]{label=(O\arabic*)}
\newlist{romaninline}{enumerate*}{1}
\setlist[romaninline]{label=(\roman*)}
\newlist{alphainline}{enumerate*}{1}
\setlist[alphainline]{label=(\alph*)}
\newlist{Cenum}{enumerate}{1}
\setlist[Cenum]{label=(C\arabic*)}
\DeclareMathOperator{\cl}{cl}
\providecommand{\conjecturename}{Conjecture}
\providecommand{\lemmaname}{Lemma}
\providecommand{\theoremname}{Theorem}
\providecommand{\corollaryname}{Corollary}
\def\wt{\widetilde}
\def\wh{\widehat}
\def\BB{{\mathcal B}}
\def\mm{~\textrm{mod}^*~}
\title{Equitable list coloring of 
 planar graphs with given maximum degree}
\author{
{{H. A. Kierstead}}\thanks{
\footnotesize {Arizona State University
Tempe, AZ, USA. E-mail: \texttt {kierstead@asu.edu}.
}}
\and
{{Alexandr Kostochka}}\thanks{
\footnotesize {University of Illinois at Urbana--Champaign, Urbana, IL 61801. E-mail: \texttt {kostochk@illinois.edu}.
 Research 
is supported in part by  NSF  Grant DMS-2153507 and by NSF RTG Grant DMS-1937241.
}}
\and
{{Zimu Xiang}}\thanks{University of Illinois at Urbana--Champaign, Urbana, IL 61801. E-mail: {\tt zimux2@illinois.edu}.
 Research 
is supported in part by  NSF RTG Grant DMS-1937241.}}
\date{\today}
\begin{document}
\maketitle

\begin{abstract}
If $L$ is a list assignment of $r$ colors to each vertex of an $n$-vertex graph 
$G$, then an {\em equitable $L$-coloring} of $G$ is a proper  coloring of 
vertices of $G$ from their lists such that no color is used more than
 $\lceil n/r\rceil$ times. 
A graph is \emph{equitably} $r$-\emph{choosable} 
if it has an equitable $L$-coloring for every $r$-list assignment $L$.
 In 2003, Kostochka, Pelsmajer and West (KPW)
 conjectured that an analog of the famous Hajnal-Szemer\' edi Theorem on equitable coloring holds for equitable list coloring, namely, that for each positive integer $r$  every graph $G$ with maximum degree at most $r-1$  is {equitably} $r$-{choosable}.

The main result of this paper is that for each $r\geq 9$ and each planar graph $G$, a stronger statement holds:
if the maximum degree of $G$ is at most $r$, then $G$  is {equitably} $r$-{choosable}. In fact, we prove the result for a broader class of graphs---the class ${\mathcal{B}}$ of  the graphs in which each bipartite subgraph $B$ with $|V(B)|\ge3$ has at most $2|V(B)|-4$ edges.
Together with some known results, this implies that the KPW Conjecture holds for all graphs in ${\mathcal{B}}$, in particular, for all
planar graphs. 
\end{abstract}

\textbf{Keywords}: equitable coloring, list coloring, planar graphs.

\textbf{Mathematics Subject Classification}: 05C07, 05C10, 05C15.

\section{Introduction}
For a graph $G$, 
$V(G)$ denotes the set of its vertices, $E(G)$ --- the set of its edges, $\Delta(G)$ --- its maximum degree, and $\delta(G)$ --- its minimum degree.
We let $|G|=|V(G)|$ and $\|G\|=|E(G)|$. 
For disjoint $X,Y\subseteq V(G)$,  $G[X]$ denotes the subgraph of $G$ induced by $X$, and $\|X,Y\|=\|X,Y\|_G$ ---
the number of edges in $G$ connecting $X$ with $Y$. If $X=\{x\}$ we may simply write $\|x,Y\|$ for $\|X,Y\|$. The degree of $x$ in $G$ is noted by $d(x)=d_{G}(x):=\|x,V(G)-x\|_{G}$. 

Let $G$ be a graph. An {\em equitable $r$-coloring} of $G$ is a proper coloring with  $r$ colors such
that any two color classes differ in size by at most one. The following is an equivalent, but more awkward, way of stating
this whose utility will become apparent later. Let $n\mm r$ be the unique integer $i$ 
in $[r]$ with $n-i$ divisible by $r$. 
\begin{align}
 \label{SE}\textrm{All classes have size at most } \lceil \frac{n}{r}\rceil, \textrm{ and at most } n \mm{r} \textrm{ classes have size } \lceil \frac{n}{r}\rceil.   
 \end{align}

Erd\H{o}s
\cite{Er} conjectured, and Hajnal and Szemer\' edi  \cite{HS} proved that if $\Delta(G)+1\leq r$
then $G$ has an equitable $r$-coloring. Chen, Lih and Wu~\cite{CLW} presented a Brooks-type
version of Erd\H{o}s' conjecture.
\begin{conjecture}[Chen, Lih \& Wu]
\label{conj:CLW}If $G$ is an $r$-colorable graph with $\Delta(G)\leq r$ then
either $G$ has an $r$-equitable coloring or both $r$ is odd and $K_{r,r}\subseteq G$.
\end{conjecture}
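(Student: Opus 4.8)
The plan is to adapt the extremal/augmentation strategy behind modern proofs of the Hajnal--Szemer\'edi theorem to the tight regime $\Delta(G)=r$. If $\Delta(G)\le r-1$ then $\Delta(G)+1\le r$ and the Hajnal--Szemer\'edi theorem already supplies an equitable $r$-coloring, so I may assume $\Delta(G)=r$. By Brooks' theorem, for $r\ge3$ a connected graph with maximum degree at most $r$ is $r$-colorable unless it equals $K_{r+1}$; hence for such graphs the hypothesis ``$r$-colorable with $\Delta\le r$'' simply rules out $K_{r+1}$, and in particular every $G$ under consideration admits at least one proper $r$-coloring to start from. I would then argue by contradiction, fixing a counterexample $G$ that minimizes $|V(G)|$ and, subject to that, $\|G\|$, and that is not $K_{r,r}$ when $r$ is odd.

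\textbf{The augmentation framework.} Among all proper $r$-colorings of $G$, I would choose one that is as balanced as possible: classes $V_1,\dots,V_r$ with exactly one oversized class $V_r$ of size $q+1$, some small class $V_1$ of size $q$, while minimizing an auxiliary potential (first the number of oversized classes, then the total excess). The goal is an augmenting step that removes one vertex from $V_r$ and lands it, possibly after a cascade of single-vertex recolorings between classes, in a small class, thereby improving the potential and contradicting the choice of coloring. To organize the cascade I would build the standard auxiliary digraph on the vertices of the nearly balanced classes, in which a vertex $u\in V_i$ points to a class $V_j$ precisely when $u$ has no neighbor in $V_j$ (so $u$ may legally be recolored $j$), and let $A$ be the set of vertices reachable from the surplus of $V_r$ by such legal moves. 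If some move within $A$ reaches a small class we are done, so the real work is to analyze the structure forced when no such move exists.

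\textbf{The extremal count.} The heart of the matter is a degree count at the tight bound $\Delta=r$. If no accessible vertex can be legally moved into a small class, then each such vertex has a neighbor in every class it could otherwise move to; together with $d(u)\le r$ this forces the bipartite graphs between the accessible classes to be extremely dense, essentially complete bipartite blow-ups. Running a discharging argument (or a direct averaging over $A$, charging each blocked move to a distinct edge) I would try to show that the number of edges inside the union of the accessible classes exceeds the budget allowed by $\Delta\le r$, unless the accessible part is exactly a copy of $K_{r,r}$. Here the parity must enter: for even $r$ a copy of $K_{r,r}$ splits evenly into classes of equal size and the count leaves slack, yielding a move; for odd $r$ the unique extremal configuration is $K_{r,r}$ itself, which admits no equitable $r$-coloring (its two sides have odd order and cannot share a class), and which by minimality of $G$ cannot be a proper part of a larger counterexample.

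\textbf{The main obstacle.} The genuine difficulty is precisely this extremal analysis at $\Delta=r$. In the Hajnal--Szemer\'edi regime $\Delta\le r-1$ every vertex always has at least one free class, so the movement cascade carries built-in slack and terminates cheaply; at $\Delta=r$ a blocked vertex may have a neighbor in every other class, the cascade can stall with no slack whatsoever, and one must rule out \emph{all} stalled configurations except the single exception $K_{r,r}$ (odd $r$). Controlling how many dense bipartite pieces can coexist across the $r$ classes, and extracting the clean $K_{r,r}$ obstruction from an a priori unstructured extremal graph, is a global rather than a local problem, and is presumably why the conjecture is still open in full generality. A realistic intermediate target---indeed the route this paper takes---is to impose a sparsity hypothesis such as membership in $\mathcal B$, under which the edge budget makes the stalling analysis tractable and the exceptional configurations are excluded outright.
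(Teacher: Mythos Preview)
The statement you are addressing is Conjecture~\ref{conj:CLW}, and the paper does \emph{not} prove it: it is stated as an open conjecture, with only partial results (small $r$, large $r$, bipartite, planar, $\BB$) cited or proved. There is therefore no ``paper's own proof'' to compare against.

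Your proposal is not a proof either, and to your credit you say so explicitly in the final paragraph: the extremal analysis at $\Delta=r$ when the cascade stalls is exactly the missing piece, and you correctly identify it as the reason the conjecture remains open. The earlier paragraphs sketch the standard Hajnal--Szemer\'edi augmentation machinery accurately, but the ``discharging argument (or a direct averaging over $A$)'' in your third paragraph is a placeholder, not an argument; no known averaging over the accessible set forces a $K_{r,r}$ in a general graph with $\Delta=r$. Your closing sentence---that imposing a sparsity hypothesis such as membership in $\mathcal B$ is the realistic route, and is what the paper actually does---is precisely right, but note that even under that hypothesis the paper proves something different: it establishes equitable (indeed SE) $r$-\emph{choosability} for $G\in\BB$ with $r\ge\max\{9,\Delta(G)\}$, which combined with prior work yields Conjecture~\ref{conj:KPW} (not Conjecture~\ref{conj:CLW}) for all of $\BB$. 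The full Chen--Lih--Wu conjecture is not settled here.
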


This conjecture was proved for some classes of graphs.  Lih and Wu~\cite{lih1996equitable} proved the conjecture for bipartite graphs. Chen, Lih and Wu~\cite{CLW} themselves proved the conjecture for $r=3$ and for $r\geq |V(G)|/2$. 
Kierstead and Kostochka proved the conjecture in~\cite{KK4} for $r=4$ and  in \cite{KK} for $r\ge |V(G)|/4$. 
Yap and Zhang~\cite{YZ} proved that the conjecture holds for planar graphs when $r\geq 13$, Nakprasit~\cite{KN,N9} confirmed the conjecture for planar graphs when $9\leq r\leq 12$, and
 Kostochka, Lin and Xiang~\cite{KLX} proved it for planar graphs when $r=8$. Together, these results can be stated
as follows.
\begin{thm}[\cite{YZ} for $r\geq 13$,~\cite{KN,N9} for $9\leq r\leq 12$ and~\cite{KLX} for $r=8$]\label{YZNKLX}
If $r\geq 8$ and
$G$ is a planar graph with $\Delta(G)\leq r$, then  $G$ has an equitable $r$-coloring.
\end{thm}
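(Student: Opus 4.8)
Theorem~\ref{YZNKLX} is the union of the results of~\cite{YZ,KN,N9,KLX}, so a single self-contained argument would be long; the plan below is the skeleton common to those papers. First I would dispose of the easy range with the Hajnal--Szemer\'edi theorem: if $\Delta(G)\le r-1$ then $G$ already has an equitable $r$-coloring, so I may assume $\Delta(G)=r$. A routine padding (or purely notational) reduction then lets me assume $r\mid n$, where $n:=|V(G)|$, so that the goal becomes a partition of $V(G)$ into exactly $r$ independent sets, each of size $q:=n/r$. I would also note in passing that the Chen--Lih--Wu exception ($r$ odd and $K_{r,r}\subseteq G$) cannot arise here, since $K_{3,3}$, hence $K_{r,r}$ for $r\ge3$, is non-planar.

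Next, suppose no such partition exists and let $G$ be a counterexample with $|V(G)|$ minimum and, subject to that, $\|G\|$ maximum; a standard argument reduces to the case where $G$ is connected. The engine of the proof is the Hajnal--Szemer\'edi/Kierstead--Kostochka augmentation: delete a carefully chosen vertex $v$ (more generally, a small set of vertices), obtain by minimality an equitable $r$-coloring of $G-v$ in which all classes have size $q$ except one ``deficient'' class of size $q-1$, and then re-insert $v$ by a chain of single-vertex moves between classes that keeps every class proper and of size $q$ while shrinking the deficiency, until $v$ can be placed. Planarity enters precisely when this chain threatens to get stuck: the classes it has visited, together with the edges from the moved vertices to the unmoved ones, form a bipartite subgraph $B\subseteq G$ (on at least three vertices), and ``stuck'' forces $\|B\|>2|V(B)|-4$, which is impossible for a planar graph --- equivalently, for a graph in $\mathcal{B}$. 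Iterating this against a short list of configurations yields strong structural facts about $G$: constraints on its low-degree vertices (for instance, that small-degree vertices are pairwise non-adjacent, or have several large-degree neighbors) and the absence of assorted other sparse local patterns.

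With this structure in hand I would finish by discharging on a plane drawing of $G$: assign charges to vertices and faces in terms of degree and face length so that, by Euler's formula, the total charge is negative; then move charge from vertices of degree near $r$ toward low-degree vertices and short faces, and invoke the structural lemmas to conclude that every vertex and every face ends with nonnegative charge --- a contradiction. The numerical thresholds in the reducible configurations must be tuned so that \emph{both} the augmentation \emph{and} the discharging go through; this is comfortable for large $r$ but tight at $r=8$, which is why the argument naturally splits into the ranges $r\ge13$, $9\le r\le12$ and $r=8$, with~\cite{KLX} needing the most delicate treatment.

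The crux --- and where essentially all the work lies --- is this middle step: choosing the right short list of reducible configurations and proving each one reducible. The difficulty is twofold. First, equitable coloring is not hereditary, so one cannot simply ``color a reducible configuration last''; each reduction must carry the global equitability constraint through the entire chain of moves, and one must prove the chain terminates. Second, the only planarity input used is the bipartite edge bound $2|V(B)|-4$, which is weaker than a direct degeneracy statement, so the configurations have to be engineered so that a stuck chain genuinely exposes a dense bipartite subgraph; extracting this uniformly down to $r=8$, rather than only for large $r$, is the heart of the matter.
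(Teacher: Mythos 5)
This statement is not proved in the paper at all: it is imported verbatim from the cited references (\cite{YZ} for $r\ge13$, \cite{KN,N9} for $9\le r\le12$, \cite{KLX} for $r=8$), so there is no in-paper argument to compare yours against. Your outline is a fair description of the general philosophy of those papers and of the present one --- minimal counterexample, delete a vertex, equitably color the rest, and re-insert by a chain of moves between classes, with planarity entering only through the bound $\|X,Y\|\le 2|X\cup Y|-4$ on bipartite subgraphs. But as a proof it is a plan rather than an argument: the entire content (which configurations are reducible, why a stuck augmentation chain really does expose a dense bipartite subgraph, and how the cases close down to $r=8$) is explicitly deferred, and you acknowledge as much. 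That is a genuine gap, not a stylistic one.

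Two more specific problems. First, the ``routine padding'' reduction to $r\mid n$ is not routine for equitable coloring: if you add $k\ge2$ isolated vertices and equitably $r$-color the padded graph, the padding vertices may all land in one class, whose restriction to $G$ then has size $q-k$, violating equitability; the usual fix (padding with a clique $K_k$, which forces the $k$ new vertices into distinct classes) destroys planarity for $k\ge5$. The cited papers, like the present one, instead work directly with the non-divisible case via the ``at most $\lceil n/r\rceil$'' formulation and an induction that adds one vertex at a time. Second, the concluding discharging phase you describe is not how these arguments actually terminate: the contradiction is obtained by edge-counting on auxiliary bipartite subgraphs (between the ``accessible'' and ``inaccessible'' parts of the vertex set), not by face-based discharging on a plane embedding --- which is precisely why the present paper can replace planarity by membership in the broader class $\mathcal{B}$, where no embedding and no faces exist. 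If you want a self-contained route to Theorem~\ref{YZNKLX}, the honest options are to reproduce the arguments of the cited papers or to derive the plain-list case from the machinery of Theorem~\ref{thm:Main2} for $r\ge9$ and cite \cite{KLX} for $r=8$.
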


There also was a series of papers~\cite{li2009equitable,kknak2012equitable,zhu2008equitable}  on equitable coloring of planar graphs with restrictions on cycle structure.

In this paper we investigate the list-coloring
version of Conjecture~\ref{conj:CLW} for planar graphs. An $r$-\emph{list assignment}
for $G$ is a function $L$ that assigns a set $L(v)$ of $r$ available colors to
each vertex $v\in V$. Fix an $r$-list assignment $L$. 
An $L$-\emph{coloring $f$}
of $G$ is a proper coloring of $G$ such that $f(v)\in L(v)$ for each vertex 
$v\in V$. Kostochka, Pelsmajer and West~\cite{KPW}  defined 
an $L$-coloring $f$ of $G$ to be \emph{equitable} if the size of every color class is at
most $\lceil|G|/r\rceil$, and called a graph \emph{equitably} $r$-\emph{choosable}
if it had an equitable $L$-coloring for every $r$-list assignment $L$. Notice
that in the list setting it is unrealistic to require that any two color classes
differ in size by at most one---the list of some vertex might only contain colors
that are in no other lists, while all other lists could be identical. It is possible that $L$  assigns all vertices the same list; in this case  $L$  is \emph{plain}. An equitable $L$-coloring might not be equitable, even when  $L$  is plain. For example, if $r=3$, 
$G=C_4$   and two color classes each have size $2$. 

Kostochka, Pelsmajer and West conjectured  the following analog of the  Hajnal--Szemer\' edi Theorem.
\begin{conjecture}
\label{conj:KPW} For each $r$, every graph $G$ with $\Delta(G)\leq r$ is equitably $(r + 1)$-choosable.
\end{conjecture}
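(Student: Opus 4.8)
The plan is to attack the conjecture by induction on $|G|$ through a vertex-insertion argument combined with an augmenting-path analysis on the color classes, in the spirit of the Kierstead--Kostochka proof of the Hajnal--Szemer\'edi Theorem. Write $k=r+1$, so the hypothesis is $\Delta(G)\le k-1$ and the goal is an equitable $L$-coloring for every $k$-list assignment $L$, i.e.\ one in which no color class exceeds $\lceil |G|/k\rceil$. I would take a minimal counterexample $(G,L)$ and pick any vertex $v$. By minimality $G-v$ admits an equitable $L$-coloring with classes $V_1,\dots,V_k$ indexed by the colors used. Since $d(v)\le k-1<k$, some color $\alpha\in L(v)$ is missing from $N(v)$, and the only way insertion can fail is that $V_\alpha$ is already full, of size $\lceil |G|/k\rceil$. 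Everything thus reduces to freeing one slot in $V_\alpha$.

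To free a slot I would build an auxiliary digraph $H$ whose nodes are the classes $V_1,\dots,V_k$, placing an arc $V_c\to V_{c'}$ whenever some $u\in V_c$ can be legally recolored $c'$ (that is, $c'\in L(u)$ and $u$ has no neighbor colored $c'$). A directed path in $H$ from $V_\alpha$ to any non-full class $V_\beta$ yields a chain of single-vertex moves -- an \emph{augmentation}, with the usual bookkeeping to check that earlier moves do not destroy the later recolorabilities -- that shifts one vertex out of $V_\alpha$ into the slack of $V_\beta$, after which $v$ can be colored $\alpha$, contradicting minimality. Hence in the counterexample every class reachable from $V_\alpha$ in $H$ must be full. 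Let $R$ be the set of vertices lying in classes reachable from $V_\alpha$, and $S=V(G-v)\setminus R$. By construction no vertex of $R$ can be recolored to a color used in $S$; equivalently, for each vertex $u\in R$ and each color $c'$ appearing in $S$, either $c'\notin L(u)$ or $u$ has a neighbor colored $c'$.

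The heart of the argument, and the step I expect to be the real obstacle, is converting "every reachable class is full and blocked" into a contradiction with $\Delta(G)\le k-1$. In the non-list Hajnal--Szemer\'edi setting the blocking is purely adjacency-based, so a double count of the edges between $R$ and $S$ forces a complete-bipartite or clique-like structure that a maximum degree below $k$ cannot support. In the list setting a vertex can be stuck either because of neighbors \emph{or} because the target color is absent from its list, and these two obstructions decouple: a list deficiency blocks a recoloring while contributing no edge to the count. My plan would be to strengthen the augmentation -- allowing the simultaneous relocation of several vertices out of $V_\alpha$ and tracking a potential (a weighted count of full, blocked reachable classes) that strictly decreases along a carefully chosen exchange -- and then to show that a stationary configuration must contain a subgraph dense enough to violate $\Delta(G)\le k-1$.

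Finally, I would be honest about the limitation. The sparsity enjoyed by the class $\BB$, namely that every bipartite subgraph on $t\ge 3$ vertices has at most $2t-4$ edges, is exactly what tames the obstruction above: it caps the edges between $R$ and $S$ and thereby forces the existence of a non-full reachable class, completing the induction. Without such sparsity the decoupling above is fatal, since a vertex of $R$ can be barred from every color of $S$ using only a handful of neighbors together with list deficiencies, so the edge count between $R$ and $S$ need not be large and no degree violation is forced. Overcoming this decoupling for arbitrary graphs appears to require a genuinely new idea beyond the augmenting-path framework alone; this is why the conjecture remains open in general, and why the present paper establishes it only for $G\in\BB$.
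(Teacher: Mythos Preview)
The statement you are asked to prove is Conjecture~\ref{conj:KPW}, which is \emph{open}; the paper does not prove it and offers no proof to compare against. What the paper establishes is Corollary~\ref{cor:Main}, the special case $G\in\BB$, as a consequence of Theorem~\ref{thm:Main2}. You recognize this yourself in your final paragraph, so your proposal is not a proof but a proof \emph{plan} together with an honest diagnosis of where it stalls. That diagnosis is accurate: the decoupling of list-absence from adjacency is precisely why the augmenting-path/double-counting argument, which suffices for ordinary equitable coloring, does not close in the list setting without extra sparsity such as \eqref{eq:bibound}.

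Two remarks on the framework itself. First, a genuine technical slip: you index the color classes as $V_1,\dots,V_k$, but in list coloring the ambient palette $\Gamma=\bigcup_v L(v)$ can be much larger than $k$, and the auxiliary digraph must live on all of $\Gamma$ (empty classes included), exactly as the paper sets up $H=(\Gamma,F)$; restricting to $k$ classes would miss legal moves into currently unused colors and would also misstate which classes are ``light''. Second, your outline---remove a vertex, take an equitable $L$-coloring of the rest by induction, build the digraph of recolor moves, and seek a directed path to a light class---matches the paper's Section~2 set-up for the $\BB$ case almost verbatim; the paper then exploits \eqref{eq:bibound} (via Lemma~\ref{lem:BB}) to bound the size of sinks in $H$ and force the required augmentation, which is exactly the step you correctly identify as unavailable for general $G$.
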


Pelsmajer [10] and independently Wang and
Lih [13] proved Conjecture~\ref{conj:KPW} for $r=3$. Kierstead and Kostochka~\cite{KK3} proved Conjecture~\ref{conj:KPW} for $r\leq 7$.
In this paper we prove for planar graphs with maximum degree at least $9$ the following list analogue of Conjecture~\ref{conj:CLW}
which 
 strengthens  Conjecture~\ref{conj:KPW} for such graphs.

\begin{thm}
\label{thm:Main} Every planar graph $G$ is equitably $r$-choosable, if $r\geq\max\{9,\Delta(G)\}$. 
\end{thm}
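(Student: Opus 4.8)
\medskip
\noindent\textbf{Proof proposal.}
I would prove the stronger statement that \emph{every $G\in\BB$ with $\Delta(G)\le r$ is equitably $r$-choosable whenever $r\ge 9$}; since every bipartite planar graph on $p\ge 3$ vertices has at most $2p-4$ edges, planar graphs belong to $\BB$, so this implies Theorem~\ref{thm:Main}. (With the theorem of Kierstead and Kostochka~\cite{KK3} confirming Conjecture~\ref{conj:KPW} for $r\le 7$ for all graphs, it also yields Conjecture~\ref{conj:KPW} for every $G\in\BB$, since for $\Delta(G)\ge 8$ one applies the stronger statement with $r$ replaced by $\Delta(G)+1\ge 9$.)

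The plan is to induct on $n:=|G|$. The base case $n\le r$ is trivial, since then $\lceil n/r\rceil=1$ and $G$, being $(n-1)$-degenerate, has a proper $L$-coloring for every $r$-list assignment $L$, which is automatically equitable. For the inductive step, fix $L$, put $b:=\lceil n/r\rceil\ge 2$ and $m:=n\mm r$, and pick a vertex $v$ of minimum degree; taking a maximum cut and applying the defining inequality of $\BB$ gives $\|G\|\le 4n-8$, so $\delta(G)\le 7$ in general and $\delta(G)\le 5$ when $G$ is planar. By induction $G-v$ has an equitable $L$-coloring $\varphi$ with color classes $V_1,\dots,V_r$. If $m=1$ then every $|V_i|\le b-1$, and since $d(v)<r$ some color is available at $v$; assigning it to $v$ finishes this case. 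So assume $m\ge 2$; then all $|V_i|\le b$, and since $\sum_i|V_i|=n-1<rb$ at least one class is not full.

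The crux is a Kempe-chain recoloring argument. Call a color $c_k$ \emph{reachable} if there are distinct colors $c_0,\dots,c_k$ with $c_0$ available at $v$ and vertices $u_i\in V_{c_{i-1}}$ satisfying $c_i\in L(u_i)\setminus\varphi(N(u_i))$ for $1\le i\le k$. If some reachable $c_k$ has $|V_{c_k}|\le b-1$, then recoloring $u_k,u_{k-1},\dots,u_1$ (each $u_i$ receiving $c_i$, in this order) is proper, keeps $|V_i|\le b$ for all $i$, and frees $c_0$ at $v$, so $v$ can be colored $c_0$ --- a contradiction. Hence the set $\mathcal A$ of colors that are either available at $v$ or reachable satisfies $|V_c|=b$ for every $c\in\mathcal A$, and is \emph{closed}: $L(u)\subseteq\mathcal A\cup\varphi(N(u))$ for every $c\in\mathcal A$ and $u\in V_c$. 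Put $a:=|\mathcal A|$, $U:=\bigcup_{c\in\mathcal A}V_c$ (so $|U|=ab$), and $t:=r-a$. Closedness forces each $u\in U$ to have at least $|L(u)\setminus\mathcal A|\ge t$ neighbors colored with colors outside $\mathcal A$, hence at least $t$ neighbors in $\overline U:=V(G-v)\setminus U$; thus $\|U,\overline U\|\ge tab$, while $|\overline U|=(n-1)-ab\le tb-1$. Also, since $v$ has at least $r-d(v)$ available colors, $a\ge r-d(v)\ge r-5$ (respectively $\ge r-7$ for general $G\in\BB$).

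Finally, apply the defining inequality of $\BB$ to the bipartite subgraph $B$ of $G$ with parts $U$ and $N_G(U)\cap\overline U$: this gives $tab\le\|B\|\le 2|V(B)|-4\le 2ab+2tb-6$, i.e.\ $(r-a)b(a-2)\le 2ab-6$. But for $r\ge 9$ and $3\le a\le r-3$, the identity $(r-a)(a-2)-2a=(a-3)(r-a-3)+(r-9)$ shows $(r-a)(a-2)\ge 2a$ (both summands are nonnegative), so $(r-a)b(a-2)\ge 2ab>2ab-6$ --- a contradiction. With $a\ge r-5$, for planar $G$ this leaves only $a\in\{r-2,r-1\}$; in the general $\BB$ setting one must also handle $a\in\{2,3\}$, which can occur only when $r$ is close to $9$ and $\delta(G)$ is close to $r$. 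I expect these boundary cases to be the main obstacle. When $a$ is almost $r$, $\overline U$ is a union of at most two color classes, of total size $\le 2b-1$, that dominates $U$; this already forces $n$ to be large, after which a sharper count of the edges inside $U\cup\overline U$ --- carried out by discharging on a planar embedding and, in the general case, by repeated use of the bipartite edge bound of $\BB$ --- should yield the contradiction. To make this work one can, before running the argument, choose $\varphi$ and the anchor $v$ so as to also minimize an auxiliary parameter (such as $|\mathcal A|$, or the number of full classes meeting $N(v)$) and enlarge the recoloring repertoire with swaps exchanging colors on two vertices; one must also keep track of the fact that non-plain list assignments force edges somewhat less efficiently than plain ones.
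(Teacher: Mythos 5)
Your setup is sound and, up to a change of viewpoint, coincides with the paper's: induction on $|G|$ over the deletion of a minimum-degree vertex, witness-shifting along chains of movable vertices, and the bipartite density bound \eqref{eq:bibound} applied to the closed set of colors to force that set to be degenerate in size. Your set $\mathcal A$ of colors reachable \emph{from} $v$ is the mirror image of the paper's set $\Lambda$ of colors from which a light class is reachable (one checks $\mathcal A\cap\Lambda=\emptyset$, so your conclusion $|\mathcal A|\ge r-2$ is the paper's Lemma~\ref{lem:BB}(c), $|\Lambda|\le 2$), and your arithmetic ruling out $3\le a\le r-3$ is correct. The computation of $\delta(G)\le 7$ for $G\in\BB$ is also fine, though the paper gets $\delta(G)\le6$ via a maximum spanning bipartite subgraph, which it later needs.

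The genuine gap is that you stop exactly where the work begins. The surviving configuration ($\mathcal A$ consisting of $r-1$ or $r-2$ full classes dominating everything, i.e.\ $|\Lambda|\le2$ in the paper's notation) is \emph{not} self-contradictory, so no ``sharper count of the edges inside $U\cup\overline U$'' or discharging on an embedding can finish by itself; one must exhibit a more elaborate recoloring. That is the entire content of the paper's Section~3: (i) the coloring of $G'$ is chosen extremal, maximizing $\widetilde\Lambda$ and then the number of nonempty accessible classes; (ii) a fractional weight function and a double count (Lemmas~\ref{lem:w(z)} and \ref{lem:manySolo}) produce a \emph{solo} vertex $z\in A$ with at least $b\ge r-2\ge7$ solo neighbors, i.e.\ neighbors $y\in B$ blocked only by $z$, which enables the two-vertex swap you allude to (move $z$ into $\widetilde y$'s class, move $y$ into $\widetilde z$); (iii) $K_{3,3}$-freeness makes all but one of these solo neighbors pairwise usable (Lemma~\ref{lem:S^*_z}); and (iv) a tight inequality chain pins down exactly where $z$ can move, yielding the ``extreme'' configuration of Lemma~\ref{lem:ext}, which is then unwound by a composite move (shift $p$ in, route a path to a class containing two useful solo neighbors, and perform the swap). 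None of steps (i)--(iv) is routine, and your proposal contains neither the extremal choice of the inductive coloring nor any mechanism for locating a vertex admitting a productive swap; so as written the proof does not go through for the only case that matters.
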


In fact, we will show that the claim of  Theorem~\ref{thm:Main} holds 
for a   more general class of graphs than planar graphs. 
Let ${\mathcal{B}}$ denote the class  of graphs such that
\begin{equation}
\|X,Y\|\leq2|X\cup Y|-4~~\text{for all disjoint}~~X,Y\subseteq V(G)~~\text{with}~~|X\cup Y|\ge3.\label{eq:bibound}
\end{equation}

By Euler's Formula, each planar graph $G$ is in ${\mathcal{B}}$.
Hence Theorem~\ref{thm:Main} follows from:

\begin{thm}
\label{thm:Main2-} Every graph $G\in \BB$ 
  is equitably $r$-choosable, if $r\geq\max\{9,\Delta(G)\}$. 
\end{thm}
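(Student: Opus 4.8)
The plan is to argue by induction on $|G|$, using a minimal counterexample $G$ with an $r$-list assignment $L$ admitting no equitable $L$-coloring, where $r\ge\max\{9,\Delta(G)\}$. Write $n=|G|$ and $N=\lceil n/r\rceil$. The standard strategy for this kind of statement (going back to the Kostochka--Pelsmajer--West paper and the Kierstead--Kostochka treatment of $r\le 7$) is to peel off a small set of vertices, color the rest equitably by minimality, and then insert the removed vertices while repairing the coloring via a Hajnal--Szemerédi-style ``rotation'' (alternating path / Kempe-chain) argument that rebalances overfull color classes. Concretely, I would first dispose of the easy regime: if $n\le rN-1$ is not tight, or if $G$ has a vertex $v$ of small degree, we can delete $v$, color $G-v$ equitably from $L$ (the bound $\lceil (n-1)/r\rceil$ is at most $N$), and then $v$ has at most $r-1<|L(v)|$ forbidden colors among its neighbors, and moreover among the $\ge 1$ colors still available to $v$ at least one is used fewer than $N$ times if the coloring of $G-v$ is ``not full everywhere''; one makes this precise by choosing the equitable coloring of $G-v$ to minimize the number of colors of size exactly $N$, or by a short exchange argument. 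So the real work is when $G$ is ``tight'': $n\equiv 0$ or close to it mod $r$, $\delta(G)$ is not too small, and every equitable coloring of every proper subgraph is as balanced as possible.

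Next, the crucial structural input is the sparseness hypothesis~\eqref{eq:bibound}: every bipartite subgraph $B$ with $|V(B)|\ge 3$ has $\|B\|\le 2|V(B)|-4$. This is exactly what one needs to control the interaction between color classes during the rotation: when one tries to move a vertex out of an overfull class $C_i$ into $C_j$, the obstruction is the bipartite graph between $C_i$ and $C_j$, and~\eqref{eq:bibound} forces it to be sparse (average degree below $4$), hence it has many vertices of degree at most $3$ into the other side. Since $r\ge 9$, each color class has size $N$ or $N-1$, so a vertex $v$ with $d(v)\le r\le$ (roughly) the number of classes sees, on average over the classes, fewer than one neighbor per class; combined with the bipartite bound one shows that there is always a ``free'' move or a short alternating path of moves that decreases the number of overfull classes. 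I would set up, as in the Hajnal--Szemerédi proof, an auxiliary digraph on the color classes with an arc $C_i\to C_j$ when some vertex of $C_i\cap N[\text{deleted set}]$ can be recolored into $C_j$, show this digraph is such that every overfull class can reach a deficient one, and then pull back an augmenting sequence of recolorings; the bipartite sparsity is used repeatedly to guarantee the arcs exist and that ``solo'' vertices (vertices dominating a class) are rare enough not to block the argument.

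The main obstacle, and where I expect the bulk of the technical work to lie, is the insertion step when the removed set is not a single low-degree vertex but a slightly larger configuration forced by a dense minimal counterexample — one must identify a reducible configuration (a small subgraph whose deletion leaves something colorable and whose re-insertion is manageable) and prove that $G\in\BB$ with $\delta(G)$ large cannot avoid all of them; this is a discharging-flavored argument on top of the coloring argument. In particular, handling the boundary case $r=9$ with $\Delta(G)=9$ is delicate because there the slack between $\Delta$ and the number of colors disappears, so the averaging ``fewer than one neighbor per class'' becomes tight and one must exploit~\eqref{eq:bibound} sharply, possibly distinguishing whether $N=1$ (then equitable just means a proper coloring with each color used at most once on... no — $N=1$ forces $n\le r$, a trivial case) versus $N\ge 2$. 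I would organize the proof as: (1) reductions to the tight case; (2) a lemma extracting, from $G\in\BB$ and large minimum degree, a reducible configuration; (3) the rotation lemma doing the recoloring, with~\eqref{eq:bibound} as its engine; (4) assembling these to contradict minimality. The inductive coloring of the subgraph plus the bipartite-sparsity-driven rebalancing is the heart; everything else is bookkeeping about the value of $n\mm r$ and the sizes $N,N-1$ of the classes.
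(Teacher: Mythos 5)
Your sketch correctly identifies the outer shell of the argument --- induction on $|G|$, deletion of a low-degree vertex, an auxiliary digraph on color classes with arcs recording which vertices can be recolored, and an augmenting-path (``shifting witnesses'') step --- and this is indeed the framework of the paper. But the proposal stops exactly where the proof begins. The case you dismiss with ``a short exchange argument'' or by ``choosing the equitable coloring of $G-v$ to minimize the number of colors of size exactly $N$'' is the entire content of the paper: after coloring $G'=G-p$ by induction, the deleted vertex $p$ may find that every color of $L(p)$ lands in a class belonging to the sink $\Phi$ of inaccessible (hence full) classes, and no amount of re-optimizing the coloring of $G'$ removes this obstruction. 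The paper's resolution requires the counting lemma that any sink $\Theta$ in the digraph satisfies $|\Theta|\le 2$ or $|\Theta|\ge r-2$ (this is where $r\ge 9$ enters, via $(r-|\Theta|)|\Theta|s < 2rs$), then a weight function $\mu:\Phi\to\{1/2,1\}$ producing a \emph{solo} vertex $z$ with at least $b$ solo neighbors, the $K_{3,3}$-freeness to show almost all solo neighbors are \emph{useful}, and finally the ``extreme'' configuration $(z_0,\Theta_0,\Upsilon,\Upsilon')$ with the tight inequality chain forcing $|S_{z_0}|=7$, $|\Theta_0|=2$, $|\Upsilon|=3$. None of this machinery is recoverable from the sketch, and the sketch gives no substitute for it.

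Moreover, you misidentify where the difficulty lies in a way that would send you down an unnecessary path: you predict that the hard case requires deleting ``a slightly larger configuration'' found by ``a discharging-flavored argument.'' In fact, condition~\eqref{eq:bibound} already forces $\delta(G)\le 6$ for every $G\in\BB$ (consider a maximum spanning bipartite subgraph), so deleting a single minimum-degree vertex always works and there is no discharging anywhere in the proof; the entire battle is fought on the recoloring side. Two further points you should make precise if you pursue this: (i) the induction hypothesis must be strengthened from ``equitable $L$-colorable'' to the SE version (at most $|G'|\mm r$ full classes), since a merely equitable $L$-coloring of $G-p$ can have so many full classes that no insertion of $p$ restores equitability --- your minimization over colorings of $G-v$ is a gesture in this direction but is not the same thing and does not compose under induction; and (ii) the coloring of $G'$ must additionally be chosen to maximize $|\widetilde{\Lambda}|$ and then the number of nonempty accessible classes, which is the optimality that all the later exchange lemmas contradict. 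As written, the proposal is a plausible plan, not a proof.
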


Since Conjecture~\ref{conj:KPW} for $r\leq 7$ was proved in~\cite{KK3}, Theorem~\ref{thm:Main2-} yields the following fact.

\begin{corollary}
\label{cor:Main} For each $r$, every  graph $G\in \BB$
 with $\Delta(G)\leq r$ is equitably $(r + 1)$-choosable.
\end{corollary}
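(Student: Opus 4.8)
The plan is to reduce the corollary to two results already available, by a split on the value of $r$. Fix a positive integer $r$ and a graph $G\in\BB$ with $\Delta(G)\le r$; the goal is an equitable $L$-coloring of $G$ for every $(r+1)$-list assignment $L$.

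First I would handle the small range $r\le 7$. Here $\Delta(G)\le r\le 7$, so the case $r\le 7$ of Conjecture~\ref{conj:KPW}, proved by Kierstead and Kostochka in~\cite{KK3}, already says that $G$ is equitably $(r+1)$-choosable. Note this step does not use the hypothesis $G\in\BB$ at all---it holds for every graph with maximum degree at most $7$.

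For $r\ge 8$ I would instead apply Theorem~\ref{thm:Main2-}, but with its parameter taken to be $r+1$ rather than $r$. The only thing to verify is that the hypothesis of that theorem is met, i.e.\ that $r+1\ge\max\{9,\Delta(G)\}$: since $r\ge 8$ we have $r+1\ge 9$, and since $\Delta(G)\le r<r+1$ we have $r+1\ge\Delta(G)$, so indeed $r+1\ge\max\{9,\Delta(G)\}$. As membership in $\BB$ is a property of $G$ itself (there is nothing further to check), Theorem~\ref{thm:Main2-} applies directly and yields that $G$ is equitably $(r+1)$-choosable.

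The two cases together cover all $r$, which proves the corollary. I expect essentially no obstacle here beyond this bookkeeping: the entire content is the numerical observation that raising the list size from $r$ to $r+1$ pushes the relevant parameter past the threshold $9$ exactly when $r\ge 8$, while the complementary range $r\le 7$ is precisely the range settled in~\cite{KK3}; all the genuine difficulty sits inside Theorem~\ref{thm:Main2-} itself.
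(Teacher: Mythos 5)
Your proposal is correct and matches the paper's intended argument exactly: the paper derives the corollary by combining the $r\le 7$ case of Conjecture~\ref{conj:KPW} from~\cite{KK3} with Theorem~\ref{thm:Main2-} applied with list size $r+1\ge 9$ for $r\ge 8$. No issues.
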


Our proof of Theorem~\ref{thm:Main2} yields a somewhat stronger result. In a recent related project, we needed the following stronger (and arguably more natural) version of equitable $L$-coloring in order to make an inductive argument work. Define an $L$-coloring to be \emph{strongly equitable (SE)} if it satisfies \eqref{SE}, and call a graph $G$ \emph{strongly equitably (SE) $r$-choosable} if it has an SE $L$-coloring for every $r$-list assignment $L$. Notice that if $L$ is a plain $r$-list assignment for $G$ then every SE $L$-coloring of $G$ is an equitable coloring of $G$, and recall that this is not true for equitable $L$-colorings. Now Theorem~\ref{thm:Main2-} will follow from our strongest theorem:

\begin{thm}
\label{thm:Main2} Every graph $G\in \BB$ 
  is SE $r$-choosable,  if $r\geq\max\{9,\Delta(G)\}$. 
\end{thm}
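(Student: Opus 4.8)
The plan is to prove Theorem~\ref{thm:Main2} by induction on $|G|$, following the now-standard framework for equitable list coloring but adapted to the strongly equitable setting and to the sparsity condition~\eqref{eq:bibound}. Fix $r\geq\max\{9,\Delta(G)\}$ and an $r$-list assignment $L$; let $n=|G|$ and write $n\mm r=s$, so an SE $L$-coloring must use every color at most $\lceil n/r\rceil$ times and at most $s$ colors exactly $\lceil n/r\rceil$ times. The base cases $n\le r$ are immediate (color greedily, every class has size $\le 1$). For the inductive step I would first handle the easy case $r\nmid n$ (equivalently $s<r$, so $\lceil n/r\rceil=\lceil (n-1)/r\rceil$): delete any vertex $v$, SE $L$-color $G-v$ by induction (noting $G-v\in\BB$ and $\Delta(G-v)\le r$), and observe that since $v$ has at most $r-1$ forbidden colors under $L$ but $|L(v)|=r$, some color $c\in L(v)$ is available at $v$; moreover among the colors available at $v$ we can choose one whose current class in $G-v$ has size $<\lceil (n-1)/r\rceil \le \lceil n/r\rceil$ — here one uses a short counting argument that not all $r$ colors of $L(v)$ can simultaneously be forbidden-or-full, exactly as in~\cite{KPW,KK3}. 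So the entire difficulty is concentrated in the case $r\mid n$, where $\lceil n/r\rceil = n/r$ and $\lceil (n-1)/r\rceil = n/r$ as well but now an SE coloring of $G-v$ has one color class of size $n/r$ that we are not permitted to grow, and we must re-balance.

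For the hard case $r\mid n$, write $n=rk$; the target is an SE $L$-coloring of $G$, i.e. every class of size $\le k$ with at most $s=r$ — wait, $n\mm r\in[r]$ so $n\mm r=r$ here — at most $r$ classes of size exactly $\lceil n/r\rceil = k$, i.e. no constraint beyond ``every class has size $\le k$''; so for $r\mid n$ SE-colorability is just: a proper $L$-coloring with all classes of size $\le n/r$. The plan is the classical one: pick a vertex $v$ of minimum degree, apply induction to $G-v$ to get a proper $L$-coloring $\varphi$ with all classes of size $\le \lceil (n-1)/r\rceil = k$; if some color in $L(v)$ is available at $v$ and lies in a class of size $<k$ we are done, so assume every color $c\in L(v)$ is either blocked by a neighbor of $v$ or saturates a class $A_c$ of size exactly $k$. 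Since $d(v)\le \Delta(G)\le r = |L(v)|$, by a counting argument there are at least $r-d(v)$ colors of the second type; in fact since in $\BB$ graphs $\delta(G)\le 3$ (because $\|V,V\|$... more precisely $\BB$ graphs are $4$-degenerate: any subgraph $H$ with $|H|\ge 3$ has $\|H\|\le 2|H|-4$... hmm, need a bound giving small minimum degree), there are several full classes meeting $L(v)$, and one performs a Kierstead--Kostochka-style \emph{rotation}: recolor one vertex $w$ of such a full class $A_c$ to free color $c$ for $v$, choosing $w$ and its new color so that no other class overflows. Iterating this builds an auxiliary digraph on the full classes meeting $L(v)$, and a sink in a suitable ``movement'' structure, or an alternating/augmenting configuration, yields the desired recoloring — this is where the bulk of the work lies.

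The role of membership in $\BB$, and the reason the threshold can be pushed down to $r\geq\Delta(G)$ rather than $r\geq\Delta(G)+1$, is exactly to control these rotations: the inequality~\eqref{eq:bibound}, $\|X,Y\|\le 2|X\cup Y|-4$ for disjoint $X,Y$ with $|X\cup Y|\ge 3$, bounds the number of edges between the neighborhood of $v$ (or of a small set of ``active'' vertices being recolored) and the union of the full color classes we are trying to pull from, and hence guarantees that in some full class $A_c$ with $c\in L(v)$ there is a vertex $w$ with few neighbors outside $A_c$, so $w$ can be recolored into a non-full class; the bipartite structure of ``vertex vs.\ the set of colors currently blocking it'' is precisely what~\eqref{eq:bibound} is tailored to. Concretely I would isolate a lemma of the form: \emph{if $v$ has minimum degree in $G\in\BB$, $G-v$ is properly $L$-colored with all classes of size $\le k$, and every color in $L(v)$ is blocked-or-full, then there is a ``solo'' or ``domino''-type local recoloring} — mirroring the $A$-, $B$-, and $Solo/Domino$ machinery of Kierstead--Kostochka in \cite{KK,KK3} — \emph{after which $v$ can be colored}. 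The SE refinement (needed only in the case $r\nmid n$ where one must also respect the ``at most $s$ full classes'' clause) is handled by tracking one extra scalar, the number of size-$\lceil n/r\rceil$ classes, through every rotation and checking it never exceeds $s$; because each rotation as designed moves a vertex from a full class to a strictly smaller one and adds $v$ to a strictly-smaller-than-$\lceil n/r\rceil$ class, the count of full classes does not increase, so SE is preserved automatically.

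The main obstacle, as in all Hajnal--Szemer\'edi-type arguments, is the combinatorial core of the hard case $r\mid n$: showing that the system of blocked and full colors at the minimum-degree vertex $v$ always admits a valid rotation. The delicate point is that a rotation recoloring $w\in A_c$ might require $w$'s new color class, say $A_{c'}$, to also be full, triggering a cascade; one must show the cascade terminates, i.e. that the auxiliary digraph whose vertices are the $\le r$ full classes meeting $\bigcup_{w}L(w)$ over active vertices $w$, with arcs recording ``recoloring here forces recoloring there,'' has a reachable sink — or else the set of classes reachable along the cascade, together with the neighborhoods of their movable vertices, violates~\eqref{eq:bibound}. Making that counting tight enough to work for all $r\ge 9$ (and seeing where $r\ge 9$, as opposed to a larger constant, is actually used — presumably in bounding $|X\cup Y|$ against $2|X\cup Y|-4$ versus the number of colors $r$) is the crux, and it is where I expect to spend essentially all of the effort; the rest is bookkeeping.
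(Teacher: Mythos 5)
Your proposal correctly identifies the general framework the paper uses (induction, delete a minimum-degree vertex, recolor via an auxiliary digraph of ``movable vertex'' relations among color classes, and use \eqref{eq:bibound} to control that digraph), but it is a plan rather than a proof: the entire combinatorial core is deferred with phrases like ``this is where the bulk of the work lies'' and ``I expect to spend essentially all of the effort'' there. Concretely, what is missing is everything the paper does after setting up the digraph $H$: the structural consequences of \eqref{eq:bibound} (that $G$ is $K_{3,3}$-free, that $\delta(G)\le 6$ via a max-cut argument --- a fact you yourself flag as unproven with ``hmm, need a bound giving small minimum degree'' --- and, crucially, that every sink $\Theta$ of $H$ has $|\Theta|\le 2$ or $|\Theta|\ge r-2$, which is where $r\ge 9$ first enters); the extremal choice of the coloring $f$ of $G-p$ maximizing the accessible part; the solo-vertex machinery with the weight function $w(x,y)=\mu(y)/\|y,\widetilde{x}\|$ that produces a vertex $z$ with at least $b\ge 7$ solo neighbors; and the ``extreme configuration'' lemma that converts such a $z$ into an actual recoloring. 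Asserting that a cascade ``has a reachable sink --- or else \eqref{eq:bibound} is violated'' is the statement of the difficulty, not its resolution.

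There are also two substantive errors in the part you do argue. First, in your ``easy case'' $r\nmid n$ you delete an arbitrary vertex $v$ and claim it has at most $r-1$ forbidden colors; this fails when $d(v)=r=\Delta(G)$, which is exactly the regime this theorem addresses (one must take $v$ of minimum degree and prove $\delta(G)\le 6$ for $G\in\BB$). Second, the case split on $r\mid n$ is not the right dichotomy, and the ``short counting argument'' does not close the case $r\nmid n$: if $(n-1)\mm r$ is close to $r$, then all $\ge r-6$ colors of $L(v)$ not blocked at $v$ may lie in full classes, and placing $v$ in any of them exceeds the allowed number of full classes, so the full rotation/accessibility machinery is needed there too. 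The paper's actual dichotomy is whether $p$ is movable to an \emph{accessible} class (one with a directed path in $H$ to a light class), and the divisibility of $n$ only affects the final bookkeeping of how many full classes are permitted. So the proposal is pointed in the right direction but leaves the theorem essentially unproved.
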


The proof of Theorem~\ref{thm:Main2} is essentially constructive.
One can naturally extract from it a polynomial-time algorithm that produces for every
 graph $G\in \BB$  with $\Delta(G)\leq r$ and every $(r + 1)$-list $L$
 for $G$ an 
 SE $L$-coloring of $G$.

\bigskip
\noindent\textbf{Notation.} 
Let $D=(\Gamma,A)$ be a digraph with node set $\Gamma$ and arc set $A$. 
In the context of digraphs, an $\alpha,\gamma$-\emph{path} is a directed path from $\alpha$ to $\gamma$. 
Vertex $\gamma$ is \emph{reachable}
from vertex $\alpha$ and $\alpha$ is \emph{accessible} from $\gamma$, if there
is a directed $\alpha,\gamma$-path in $D$. For $\Theta\subseteq \Gamma$, let $\cl^{+}(\Theta)$
denote the set of $\gamma\in \Gamma$ that are reachable from a vertex of $\Theta$. A set
$\Theta\subseteq \Gamma$ is a \emph{sink} if $\cl^{+}(\Theta)=\Theta$.

For a positive integer  $i$, put $[i]=\{1,2,\ldots,i\}$.
 If $S$ is a subset of $T$, and  $x\in T$
 then put $S+x:=S\cup\{x\}$ and $S-x:=S\smallsetminus\{x\}$.

\section{Set-up for the proof of Theorem~\ref{thm:Main2}}

We argue by induction on $|G|$. Fix integer $r\ge 9$, graph $G=(V,E)\in\BB$ with $\Delta(G)\le r$ and $r$-list assignment $L$ for $G$. The case $|G|=1$ is trivial, so suppose $|G|>1$.

Fix $p\in V$ with $d(p)=\delta(G)$. Set $V':=V-p$ and $G':=G[V']$. 
By induction, graph $G'$ has an SE $L$-coloring $f:V'\to \Gamma$. Now $f$ defines an equivalence relation $\sim$
on $V'$ by $x\sim y$ iff $f(x)=f(y)$. 
Let $s:=\lceil|G'|/r\rceil$ and $\Gamma:=\bigcup_{x\in V}L(x)$.
Denote the equivalence class of $x$ by
$\widetilde{x}=\widetilde{x}(f)$. If $f(x)=\gamma$ then $f^{-1}(\gamma)=\widetilde{x}$.
We 
extend this notation to $\gamma\in \Gamma$ by setting $\widetilde{\gamma}=\widetilde{\gamma}(f):=f^{-1}(x)=\widetilde{x}$.
It is possible that $\widetilde{\gamma}=\emptyset$. Call a class $\widetilde{\gamma}$
 \emph{light} if $|\widetilde{\gamma}|<s$, \emph{full} if $|\widetilde{\gamma}|=s$
and \emph{overfull} if $|\widetilde{\gamma}|>s$. Using this language, $f$ being SE means that no class is overfull and at most $|G|\mm r$ classes are full.

Let $H=(\Gamma,F)=H(f)$ be an auxiliary digraph on the color set $\Gamma$ with arc set
$F$, where $\alpha\beta\in F$ if there is a vertex $x\in\widetilde{\alpha}$ such
that $\beta\in L(x)$ and $N(x)\cap\widetilde{\beta}=\emptyset$. In this case, $x$
is \emph{movable} to $\widetilde{\beta}$ and $x$ is a \emph{witness} for the arc
$\alpha\beta$. If $\beta\in L(x)$ and $N(x)\cap\widetilde{\beta}=\{y\}$ then $y$
\emph{blocks} $x$. Notice that if $x$ is a witness for $\alpha\beta$ then we can
obtain a new (possibly inequitable) $L$-coloring by moving $x$ 
from $\widetilde{\alpha}$
to $\widetilde{\beta}$. This operation may change $H$ in several ways. First, $x$
is no longer available to witness outedges of $\alpha$, but it may witness outedges
of $\beta$. Also, $x$ no longer blocks inedges of $\alpha$, but it may block inedges
of $\beta$.

For a subset $\Theta\subseteq \Gamma$, put $\widehat{\Theta}=\widehat{\Theta}(f):=\{\widetilde{\gamma}:\gamma\in \Theta\}$\footnote{The empty set is allowed to appear multiple times in $\widehat{\Theta}$, once for every unused color in $\Theta$.}
and $\widetilde{\Theta}=\widetilde{\Theta}(f):=\bigcup\widehat{\Theta}$. Let $\Lambda_{0}=\Lambda_{0}(f):=\{\gamma\in \Gamma:|\widetilde{\gamma}|<s\}$.
So $\widehat{\Lambda}_{0}$ is the set of light classes. As $r(s-1)\leq|G'|<|G|\leq rs$,
$\Lambda_{0}\ne\emptyset$. Call a color $\gamma$, and a class $\wt\gamma$, \emph{accessible}, if there is a (directed)
$\gamma,\Lambda_{0}$-path $P$ (possibly trivial) in $H$. We say that $P$ \emph{witnesses}
that $\gamma$ is accessible. Let $\Lambda=\Lambda(f)$ be the set of accessible colors, and
put $\Phi=\Phi(f):=\Gamma\smallsetminus \Lambda(f)$. Now every class in $\widehat{\Phi}$ is full. Put 
$B:=\widetilde{\Phi}$,
$b:=|B|$, $a:=r-b$ and $A:=\widetilde{\Lambda}$ (See Figure~\ref{fig:LP}). 
Then $|B|=bs$ and $(a-1)s\leq|A|\leq as-1$.
If $|\Lambda|=a$ then $|\Gamma|=r$, so $L(v)=\Gamma$ for every vertex $v\in V$, i.e., L  is plain. Thus
\begin{equation}
\mbox{ $|\Lambda|>a$ or $L$ is plain.}\label{eq:|A|>s}
\end{equation}

\begin{figure}[t] 
    \begin{minipage}[t]{0.55\textwidth}
    \centering
         \begin{tikzpicture}
    
    \draw (0,-.1) rectangle (7.5,.7);
    \draw (-0.3,0.25) node(H) {$H$};
    
    \draw (0,-0.6) rectangle (.75,-0.2);
    \fill[red!10!white] (0,-0.6) rectangle (.75,-0.2);
    \node at (.375,-0.9) {$\widetilde\gamma_1$};
    \draw (.75,-1) rectangle (1.5,-0.2);
    \fill[red!10!white] (.75,-1) rectangle (1.5,-0.2);
    \node at (1.125,-1.3) {$\widetilde\gamma_2$};
    \draw (1.5,-1.5) rectangle (2.25,-0.2);
    \fill[red!10!white] (1.5,-1.5) rectangle (2.25,-0.2);
    \node at (1.875,-1.8) {$\widetilde\gamma_3$};
    \draw (2.25,-2) rectangle (3,-0.2);
    \fill[red!10!white] (2.25,-2) rectangle (3,-0.2);
    \node at (2.625,-2.3) {$\widetilde\gamma_4$};
    \draw (3,-2) rectangle (3.75,-0.2);
    \fill[blue!10!white] (3,-2) rectangle (3.75,-0.2);
    \node at (3.375,-2.3) {$\widetilde\gamma_5$};
    \draw (3.75,-2) rectangle (4.5,-0.2);
    \fill[blue!10!white] (3.75,-2) rectangle (4.5,-0.2);
    \node at (4.125,-2.3) {$\widetilde\gamma_6$};
    \draw (4.5,-2) rectangle (5.25,-0.2);
    \fill[blue!10!white] (4.5,-2) rectangle (5.25,-0.2);
    \node at (4.875,-2.3) {$\widetilde\gamma_7$};
    \draw (5.25,-2) rectangle (6,-0.2);
    \fill[blue!10!white] (5.25,-2) rectangle (6,-0.2);
    \node at (5.625,-2.3) {$\widetilde\gamma_8$};
    \draw (6,-2) rectangle (6.75,-0.2);
    \fill[blue!10!white] (6,-2) rectangle (6.75,-0.2);
    \node at (6.375,-2.3) {$\widetilde\gamma_9$};
    \draw (6.75,-2) rectangle (7.5,-0.2);
    \fill[blue!10!white] (6.75,-2) rectangle (7.5,-0.2);
    \node at (7.125,-2.3) {$\widetilde\gamma_{10}$};

    \draw (-0.3,-.6) node(G) {$G'$};
    
    \draw (.375, 0.4) node[vtx,scale=0.5,color=red] (){};
    \node at (.375,0.1) {$\gamma_1$};
    \draw (1.125, 0.4) node[vtx,scale=0.5,color=red] (){};
    \node at (1.125,0.1) {$\gamma_2$};
    \draw (1.875, 0.4) node[vtx,scale=0.5,color=red] (){};
    \node at (1.875,0.1) {$\gamma_3$};
    \draw (2.625, 0.4) node[vtx,scale=0.5,color=red] (){};
    \node at (2.625,0.1) {$\gamma_4$};
    \draw (3.375, 0.4) node[vtx,scale=0.5,color=blue] (){};
    \node at (3.375,0.1) {$\gamma_5$};
    \draw (4.125, 0.4) node[vtx,scale=0.5,color=blue] (){};
    \node at (4.125,0.1) {$\gamma_6$};
    \draw (4.875, 0.4) node[vtx,scale=0.5,color=blue] (){};
    \node at (4.875,0.1) {$\gamma_7$};
    \draw (5.625, 00.4) node[vtx,scale=0.5,color=blue] (){};
    \node at (5.625,0.1) {$\gamma_8$};
    \draw (6.375, 0.4) node[vtx,scale=0.5,color=blue] (){};
    \node at (6.375,0.1) {$\gamma_9$};
    \draw (7.125, 0.4) node[vtx,scale=0.5,color=blue] (){};
    \node at (7.125,0.1) {$\gamma_{10}$};

    \draw [decorate,
    decoration = {calligraphic brace,
        raise=5pt,
        aspect=0.5,
        amplitude=7pt}] (0,.6) -- (3,.6);
    
    \draw (1.5,1.3) node(Lambda) {$\Lambda$};
    
    \draw [decorate,
    decoration = {calligraphic brace,
        raise=5pt,
        aspect=0.5,
        amplitude=7pt}] (3,.6) -- (7.5,.6);
    
    \draw (5.25,1.3) node(Phi) {$\Phi$};
    
    \draw [decorate,
    decoration = {calligraphic brace,
        raise=5pt,
        aspect=0.5,
        amplitude=7pt}] (3,-2.35) -- (0,-2.35);
    
    \draw (1.5,-3.15) node(A) {$A:=\widetilde{\Lambda}:=\bigcup\widehat{\Lambda}$};
    
    \draw [decorate,
    decoration = {calligraphic brace,
        raise=5pt,
        aspect=0.5,
        amplitude=7pt}] (7.5,-2.35) -- (3,-2.35);
    
    \draw (5.25,-3.15) node(B) {$B:=\widetilde{\Phi}:=\bigcup\widehat{\Phi}$};

     \draw [decorate,
    decoration = {lineto,
        raise=5pt,
        aspect=0.5,
        amplitude=7pt}] (7.6,-0.2) -- (7.8,-0.2);

    \draw [decorate,
    decoration = {lineto,
        raise=5pt,
        aspect=0.5,
        amplitude=7pt}] (7.6,-2) -- (7.8,-2);

    \draw [decorate,
    decoration = {lineto,
        raise=5pt,
        aspect=0.5,
        amplitude=7pt}] (7.7,-0.2) -- (7.7,-0.9);

    \draw [decorate,
    decoration = {lineto,
        raise=5pt,
        aspect=0.5,
        amplitude=7pt}] (7.7,-2) -- (7.7,-1.3);
    
    \draw (7.7,-1.1) node(s) {$s$};
    \end{tikzpicture}
    \caption{Coloring $f$ of $G'$ and its auxilary\\ graph $H$: $b=6$ and if $r=9$ then $a=3$}
    \label{fig:LP}
    \end{minipage}
    \begin{minipage}[t]{0.45\textwidth}
    \centering
        \begin{tikzpicture}
        [decoration={markings, 
        mark= at position 0.5 with {\arrow{stealth}}}
    ] 
    
        \draw (0,-.1) rectangle (5,.5);
        \draw (-.3,0.2) node(P) {$P$};
        \draw (0.5,0.2) node[vtx, scale=0.75] (n0){};
        \foreach \x in {1,...,4} {
       \draw (\x+0.5, 0.2) node[vtx,scale=0.75] (n\x){};
       \draw [postaction={decorate}] (\x+0.5,0.2) -- (\x-0.5,0.2);
    }
    
    \foreach \x in {0,...,4} {
       \draw (\x,-1.5) rectangle (\x+1,-0.2);
    }
    \draw (-0.3,-.85) node(Shifting) {$G$};
    
    \draw (4.5, -.45) node[vtx,scale=0.75] (v1){};
        \draw (4.5, -.45) node[yshift=-0.3cm] {$v_1$};
    \draw (3.5, -1.25) node[vtx,scale=0.75] (v2){};
        \draw (3.5, -1.25) node[yshift=0.3cm] {$v_2$};
    \draw (2.5, -.45) node[vtx,scale=0.75] (v3){};
        \draw (2.5, -.45) node[yshift=-0.3cm] {$v_3$};
    \draw (1.5, -1.25) node[vtx,scale=0.75] (v4){};
        \draw (1.5, -1.25) node[yshift=0.3cm] {$v_4$};
    
    \draw [->,dashed] (4.5, -.45) -- (3.5, -.45);
        \draw [->,dashed] (3.5, -1.25) -- (2.5, -1.25);
         \draw [->,dashed] (2.5, -.45) -- (1.5, -.45);
         \draw [->,dashed] (1.5, -1.25) -- (0.5, -1.25);
         
    \draw [->,thick] (2.5,-1.75) -- (2.5, -2);
    
    \foreach \x in {0,...,4} {
       \draw (\x,-3.5) rectangle (\x+1,-2.2);
    }
    
    \draw (3.5, -2.45) node[vtx,scale=0.75] (v1'){};
        \draw (3.5, -2.45) node[yshift=-0.3cm] {$v_1$};
    \draw (2.5, -3.25) node[vtx,scale=0.75] (v2'){};
        \draw (2.5, -3.25) node[yshift=0.3cm] {$v_2$};
    \draw (1.5, -2.45) node[vtx,scale=0.75] (v3'){};
        \draw (1.5, -2.45) node[yshift=-0.3cm] {$v_3$};
    \draw (.5, -3.25) node[vtx,scale=0.75] (v4'){};
        \draw (.5, -3.25) node[yshift=0.3cm] {$v_4$};
    \draw (.5, -4.25) node{ };
    \end{tikzpicture}
    \caption{Shifting witnesses along $P$}
    \label{fig:shifting-witnesses}
    \end{minipage}
    
\end{figure}

Let $P=\gamma_{1}\dots\gamma_{t}$ be a path in $H$, where each arc $\gamma_{i}\gamma_{i+1}$
is witnessed by a vertex $v_{i}$.  
Obtain a 
 new $L$-coloring  $f'$ by
moving each $v_{i}$ from $\widetilde{\gamma}_{i}$ to $\widetilde{\gamma}_{i+1}$ (See Figure~\ref{fig:shifting-witnesses}).
Then $\wt \gamma_{1}(f')=\wt \gamma_{1}-v_{1}$, $\wt \gamma_{t}(f')=\wt \gamma_{t}+v_{t-1}$, $\wt \gamma_{i}(f')=\wt \gamma_{i}+v_{i-1}-v_{i}$
for $i\in[t-1]-1$ and $\widetilde{\beta}(f')=\widetilde{\beta}$ for all $\beta\in \Gamma\smallsetminus V(P)$.
We say that $f'$ is obtained by 
\emph{shifting witnesses along} $P$.

Suppose $p$ is movable to an accessible class $\wt\lambda_1$. 
Let $P=\lambda_1\dots\lambda_t$ witness that $\lambda_1$ is 
accessible from a light class $\lambda_t$. Obtain a new
$L$-coloring  $f'$ by moving $p$ to $\lambda_1$ and 
switching witnesses along $P$. 
We claim that $f'$ is an SE $L$-coloring of $G$. 
This operation only changes the size  of the light
class $\wt\lambda_t$: $|\wt\lambda_t(f')|=|\wt\lambda_t|+1$. 
Thus no classes of $f'$ are overfull, and so $f'$ is equitable.
To check that $f'$ does not have too many full classes, 
consider two cases. 
If $|G'|\bmod r\ne0$ then $|G|\mm r=(|G'|\mm r)+1$,
and we are allowed one additional full class. Else $|G'| \bmod r=0$; now $\lceil\frac{|G|}{r}\rceil=\lceil\frac{|G'|}{r}\rceil+1$ and $|G|\mm r=1$. Thus $\wt\lambda(f')$ is the only possible full class of $f'$, and one such class is allowed.

For the rest of the proof, assume that we must work harder, i.e.,
\begin{equation}
\mbox{$p$ is not movable to any accessible class.}\label{pnotmove}    
\end{equation}

\begin{lem}
\label{pr-1}
\begin{romaninline}
\item \label{pri}If $y\in{B}+p$ then $|L(y)\cap \Lambda|\geq r-b=a;$ 
\item \label{prii} if $y\in B$ then $y$ is not movable to any accessible class; and 
\item \label{priii}$a\leq d(p)$. 
\end{romaninline}
\end{lem}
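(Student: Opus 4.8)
The plan is to establish the three parts in the stated order, each by directly unwinding the definitions of the auxiliary digraph $H(f)$, of accessibility, and of ``movable''; in particular none of them needs the structural hypothesis $G\in\BB$. Part~\ref{priii} will be deduced from part~\ref{pri} together with~\eqref{pnotmove}, while part~\ref{prii} is a short reachability argument in $H(f)$.

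For part~\ref{pri}: since $L$ is an $r$-list assignment, $L(y)$ is an $r$-element subset of $\Gamma=\Lambda\cup\Phi$, and the $b$ colors of $\Phi$ are disjoint from $\Lambda$, so $|L(y)\cap\Phi|\le b$ and hence $|L(y)\cap\Lambda|=r-|L(y)\cap\Phi|\ge r-b=a$. This uses neither \eqref{pnotmove} nor the hypothesis $y\in B+p$: the conclusion holds for every $y\in V$, and we record it only for the vertices we shall need.

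For part~\ref{prii}: let $y\in B$ and put $\gamma:=f(y)$, so $\gamma\in\Phi$ and $y\in\widetilde{\gamma}$. Suppose, towards a contradiction, that $y$ is movable to some accessible class $\widetilde{\delta}$; then $\delta\in\Lambda$, $\delta\in L(y)$ and $N(y)\cap\widetilde{\delta}=\emptyset$, so $y$ is a witness for the arc $\gamma\delta$ of $H(f)$. Since $\delta$ is accessible, $H(f)$ has a directed $\delta,\Lambda_{0}$-path; prefixing the arc $\gamma\delta$ to it yields a directed $\gamma,\Lambda_{0}$-walk and hence a directed $\gamma,\Lambda_{0}$-path. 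Thus $\gamma$ is accessible, contradicting $\gamma\in\Phi=\Gamma\smallsetminus\Lambda$.

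For part~\ref{priii}: apply part~\ref{pri} with $y=p$ to obtain $|L(p)\cap\Lambda|\ge a$. For each $\beta\in L(p)\cap\Lambda$ the class $\widetilde{\beta}$ is accessible, so by~\eqref{pnotmove} the vertex $p$ is not movable to $\widetilde{\beta}$; as $\beta\in L(p)$, this can only happen if $N(p)\cap\widetilde{\beta}\ne\emptyset$, so we may choose a vertex $x_{\beta}\in N(p)\cap\widetilde{\beta}$. For distinct $\beta,\beta'\in L(p)\cap\Lambda$ the classes $\widetilde{\beta}$ and $\widetilde{\beta'}$ are disjoint (each vertex receives a single $f$-color), whence $x_{\beta}\ne x_{\beta'}$; therefore $N(p)$ contains the $|L(p)\cap\Lambda|\ge a$ distinct vertices $x_{\beta}$, i.e.\ $d(p)\ge a$. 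I do not expect a genuine obstacle anywhere here; the only two points worth a moment's care are that reachability in $H(f)$ is transitive along arcs (used in part~\ref{prii}) and that ``$p$ not movable to an accessible class'' really pins a \emph{distinct} neighbour of $p$ inside each accessible class named in $L(p)$ (used in part~\ref{priii}). This lemma is a warm-up whose real payoff is the inequality $a\le d(p)=\delta(G)$ exploited later in the proof.
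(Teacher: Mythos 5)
Your proposal is correct and follows essentially the same route as the paper: part~\ref{pri} by counting $|L(y)\cap\Lambda|=r-|L(y)\cap\Phi|\ge r-b$, part~\ref{prii} by prepending the arc witnessed by $y$ to a path certifying accessibility (contradicting $f(y)\in\Phi$), and part~\ref{priii} by combining part~\ref{pri} with~\eqref{pnotmove} to locate a distinct neighbour of $p$ in each class $\widetilde{\beta}$ with $\beta\in L(p)\cap\Lambda$. Your version merely spells out the disjointness of the classes and the transitivity of reachability, which the paper leaves implicit.
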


\begin{proof}\ref{pri}
 Observe that $|L(y)\cap \Lambda|=|L(y)\smallsetminus \Phi|\geq r-b=a$. 

\ref{prii} 
Suppose $y\in \wt\phi$ with $\phi\in\Phi$ and $y$ is movable to an accessible class $\wt\alpha_1$. 
Let $P=\alpha_1\dots \alpha_t$ witness that $\alpha$ is accessible. Now 
 $\phi\alpha P$ 
witnesses that $\beta\in \Lambda$, a contradiction.

By \eqref{pnotmove}, $p$ has a neighbor in every class $\wt\lambda$ with $\lambda\in \Lambda\cap L(p)$, so by \ref{pri}, $a\le d(p)$. 
\end{proof}

We end this section by establishing the properties of graphs in $\BB$ that we will need for our proof.

\begin{lem}\label{lem:BB}
    $G$  satisfies:
    \begin{enumerate}
        \item $K_{3,3}\nsubseteq G$;
        \item $\delta(G)\le6$; and
        \item if $\Theta\subseteq \Gamma$ is a sink then
\begin{romaninline}
\item \label{|sink|i} $|\Theta|\leq2$ or $|\Theta|\geq r-2$ and
\item \label{|sink|ii} $b\geq r-2$ and $a\le 2$.
\end{romaninline}
    \end{enumerate}
\end{lem}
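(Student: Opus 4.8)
The plan is to deduce all three items from the edge bound \eqref{eq:bibound}: items~1 and~2 use it directly, and item~3 uses it together with one structural remark about sinks. (If $|G|\le3$ everything is trivial, so throughout I assume $|G|\ge4$.)

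\emph{Item~1} is immediate: if $K_{3,3}\subseteq G$ with sides $X,Y$, then $X,Y$ are disjoint, $|X\cup Y|=6\ge3$, and $\|X,Y\|\ge9>8=2|X\cup Y|-4$, contradicting \eqref{eq:bibound}. For \emph{item~2}, suppose $\delta(G)\ge7$ and pick a partition $V(G)=V_1\cup V_2$ maximizing $\|V_1,V_2\|$; if some vertex had strictly more neighbours in its own part than across, moving it would increase the cut, so every $v$ has at least $\lceil d(v)/2\rceil\ge4$ neighbours across. Then $\|V_1,V_2\|=\tfrac12\sum_v(\text{cross-degree of }v)\ge2|G|>2|G|-4$, contradicting \eqref{eq:bibound} with $X=V_1,\,Y=V_2$.

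For \emph{item~3}, fix a sink $\Theta$ and put $t:=|\Theta|$, $W:=\widetilde\Theta$. The decisive observation is that for every $x\in W$ and $\beta\in L(x)\smallsetminus\Theta$ the vertex $x$ has a neighbour in $\widetilde\beta$ — otherwise $x$ witnesses the arc $f(x)\beta$, placing $\beta\in\cl^{+}(\Theta)\smallsetminus\Theta$ and contradicting $\cl^{+}(\Theta)=\Theta$. Since $|L(x)\smallsetminus\Theta|\ge r-t$ and distinct colours give distinct neighbours lying in classes outside $\Theta$, every $x\in W$ has $\ge r-t$ neighbours in $V'\smallsetminus W$, and therefore
\[
\|W,\,V'\smallsetminus W\|\ \ge\ (r-t)\,|W|;
\]
call this $(\ast)$. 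To get \ref{|sink|ii} I apply $(\ast)$ to $\Theta=\Phi$: here $\Phi$ is a sink (an arc out of $\Phi$ would reach $\Lambda$ and make its tail accessible), every class of $\widehat\Phi$ is full, so $W=B$, $|B|=bs$, $V'\smallsetminus W=A$, $t=b$, $r-t=a$, and $b=|\Phi|\ge r-6\ge3$ by Lemma~\ref{pr-1}\ref{priii} and item~2. Now $(\ast)$ yields $\|B,A\|\ge abs$, whereas \eqref{eq:bibound} yields $\|B,A\|\le2|V'|-4<2|G|\le2rs$; hence $ab<2(a+b)$, i.e.\ $(a-2)(b-2)<4$. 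If $a\ge3$ then, using $a\le d(p)=\delta(G)\le6$ and $b=r-a\ge3$, a short check over $a\in\{3,4,5,6\}$ with $r\ge9$ gives $(a-2)(b-2)\ge4$ — a contradiction. So $a\le2$, i.e.\ $b\ge r-2$.

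For \ref{|sink|i}, assume $3\le t\le r-3$. If every class of $\widehat\Theta$ is full (in particular whenever $\Theta\subseteq\Phi$), then $|W|=ts$, and $(\ast)$ with \eqref{eq:bibound} gives $t(r-t)<2r$; but on $\{3,\dots,r-3\}$ the quantity $t(r-t)$ is smallest at the endpoints, where it is $3(r-3)\ge2r$ for $r\ge9$ — contradiction. For an arbitrary sink $\Theta$, note that $\Theta\cap\Phi$ is again a sink with all classes full (an intersection of sinks is a sink), so the previous line forces $|\Theta\cap\Phi|\le2$ (it cannot be $\ge r-2>t$); hence at least $t-2\ge1$ colours of $\Theta$ are accessible. \textbf{The main obstacle} is to rule out such ``mostly accessible'' medium sinks: $(\ast)$ is vacuous when many classes of $\Theta$ are empty, so here one must instead exploit \eqref{pnotmove} — concretely, choose the SE colouring $f$ of $G'$ so that the set $\Lambda(f)$ of accessible colours is as large as possible. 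Under that choice $p$ behaves like an extra member of $W$ (by \eqref{pnotmove} and Lemma~\ref{pr-1}\ref{pri}, $p$ has a neighbour in $\widetilde\lambda$ for every $\lambda\in L(p)\cap\Lambda$), so it can be added to the bipartite count; alternatively, a medium ``mostly accessible'' sink can be shown to admit a recolouring of $G'$ that enlarges $\Lambda$, contradicting the maximality of $\Lambda(f)$. Items~1 and~2, by contrast, are routine.
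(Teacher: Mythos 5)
Your items (a), (b) and (cii) are correct and take essentially the paper's route. Item (a) is the same one-line computation; your max-cut argument for (b) is the paper's ``maximal spanning bipartite subgraph'' argument in different clothing; your key inequality $(\ast)$ is exactly the paper's observation that no vertex of $\widetilde\Theta$ is movable out of a sink and hence has at least $r-|\Theta|$ neighbours in $\widetilde{\Theta}'$; and your chain $abs\le\|B,A\|<2rs$ for $\Theta=\Phi$ is the paper's $(r-|\Theta|)|\Theta|s\le\|\widetilde\Theta,\widetilde{\Theta}'\|<2rs$, read off via $(a-2)(b-2)<4$ instead of via the minimum of $t(r-t)$ on $[3,r-3]$ --- the same arithmetic.

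The place where you stop, namely (ci) for a sink some of whose classes are light or empty, is not a gap you need to close: it is a gap in the lemma's literal statement that the paper's own proof shares. The paper writes $\|\widetilde\Theta,\widetilde{\Theta}'\|\ge(r-|\Theta|)|\Theta|s$, which tacitly uses $|\widetilde\Theta|=|\Theta|s$, i.e., that every class of $\widehat\Theta$ is full; so the paper proves exactly the case you prove and nothing more. Moreover the fullness hypothesis cannot simply be dropped: a colour whose class is empty has no out-arcs, so any collection of such colours is a sink of whatever size that collection happens to have, and neither \eqref{pnotmove} nor the optimality conditions \ref{enu:1}--\ref{enu:2} (which are introduced only after this lemma) are available here to exclude medium-sized examples. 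The statement is harmless as used because every sink to which (ci) is later applied --- $\Phi-f(y)$ in Lemma~\ref{lem:soloToA}, and $\cl^{+}(\Psi)$, $\cl^{+}(\Psi')$ for $\Psi,\Psi'\subseteq\Phi$ in the closing argument --- is contained in the sink $\Phi$, all of whose classes are full. So the correct resolution of your ``main obstacle'' is not the recolouring you sketch but the observation that the lemma need only be stated for sinks all of whose classes are full; with that reading your proof is complete and coincides with the paper's.
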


\begin{proof}
    As $G\in \BB$, $G$ satisfies \eqref{eq:bibound}, 
    so $K_{3,3}\nsubseteq G$. For (b), consider a spanning bipartite 
    subgraph $G_0$ with $\|G_0\|$ maximum. By \eqref{eq:bibound}, 
    $\delta(G_0)\le3$. Say $d_{G_0}(x)\le3$. Then $d_G(x)\le6$ 
    since otherwise moving $x$ to the other part of $G_0$ would 
    increase $\|G_0\|$.

For (c), put $\Theta'=\Gamma\smallsetminus \Theta$. As $\Theta$ is a sink, if vertex $v\in\widetilde{\Theta}$ is not movable to any class
in $\widehat{\Theta}'$, and so $\|v,\widetilde{\Theta}'\|\geq|L(v)\smallsetminus \Theta|\geq r-|\Theta|$.
By (\ref{eq:bibound}),
\begin{align*}
2rs>2|\widetilde{\Theta}\cup\widetilde{\Theta}'|>\|\widetilde{\Theta},\widetilde{\Theta}'\| & \geq(r-|\Theta|)|\Theta|s.
\end{align*}
As $r\geq9$, $|\Theta|\leq2~\text{or}~|\Theta|\geq r-2$, so (ci) is proved.
By Lemma~\ref{pr-1}(iii) and (b), $|\Phi|=r-a\geq 9-6\geq3$. Since
$\Phi$ is a sink in $H$, (ci) gives  $|\Phi|\geq r-2$   and $a\le 2$. 
\end{proof}

\section{Proof of Theorem~\ref{thm:Main2}}

Choose an  SE $L$-coloring $f$ of $G'$  so that 
\begin{Oinline}
\item \label{enu:1}$A=\widetilde{\Lambda}(f)$ is maximum, and subject to this, 
\item \label{enu:2}$\widehat{\Lambda}(f)$ has as many nonempty classes as possible. 
\end{Oinline}

\begin{lem}\label{lem:Free}
\begin{romaninline}
\item \label{enu:eset}
If there is $\lambda\in \Lambda$ with $\wt \lambda =\emptyset$ then there is $\lambda'\in\Lambda$ with $|\wt \lambda'|=1$.  \\
\item \label{enu:light}
If $|\Lambda|>a$ then every class in $\widehat{\Lambda}$ is light. \\
\item \label{enu:plain} If  $|\Lambda|=a$ then there is at most one full class  in $|\wt{\Lambda}|$, and only when $a=2$.\\
\end{romaninline}
\end{lem}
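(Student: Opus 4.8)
The plan is to derive each of (i), (ii), (iii) from the extremally chosen coloring $f$, using \eqref{pnotmove} together with the bounds $a\le 2$ and $b\ge r-2$ supplied by Lemma~\ref{lem:BB}; in every case one modifies $f$ slightly --- moving a single vertex into a deficient colour class, or shifting witnesses along a witnessing path of minimum length --- and argues that the resulting coloring $f'$ is still SE but contradicts one of: the maximality of $|\wt\Lambda|$ required by (O1), the maximality of the number of nonempty classes of $\widehat\Lambda$ required by (O2), or \eqref{pnotmove}. Two of the parts are pure counting. Since every $\Phi$-class is full, $|A|=|\wt\Lambda|=|G'|-bs$, and $|G'|<|G|\le rs$ yields $|A|\le as-1$. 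If $a=1$ this reads $|A|<s$, so every class of $\widehat\Lambda$ has size $<s$; hence no accessible class is full, which proves (ii) and (iii) when $a=1$. If instead $|\Lambda|=a$, then by \eqref{eq:|A|>s} $L$ is plain and $\widehat\Lambda$ consists of exactly $a$ classes; as $|A|\le as-1$ they are not all full, so at most $a-1\le 1$ of them are, and a full one requires $a=2$. This settles (iii).

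It remains to handle (ii) when $a=2$ (so $b=r-2$ and, since $|\Lambda|>a$, $L$ is not plain and $|\Gamma|\ge r+1$), and all of (i). For (ii): suppose some accessible class $\wt\lambda$ is full, and let $P=\pi_1\dots\pi_m$ with $\pi_1=\lambda$ be a witnessing path of minimum length from $\lambda$ to $\Lambda_0$; since $\wt\lambda$ is full, $m\ge 2$. Shifting witnesses along $P$ gives an SE coloring $f'$ in which $\wt\lambda$ has lost the witness $v_1$ of $\pi_1\pi_2$ and so become light, while only $\wt\pi_m$ has grown (still to size at most $s$). The crux is to show that this is an improvement over $f$: if $\wt\pi_m$ was empty then $\widehat\Lambda(f')$ gains a nonempty class; more robustly, deleting $v_1$ from the previously full class $\wt\lambda$ should \emph{unblock} an arc $\tau\lambda$ in $H(f')$ for some $\tau\in\Phi(f)$, making $\tau$ accessible and $|\Lambda(f')|>|\Lambda(f)|$, or should make $p$ movable to the now-light class $\wt\lambda(f')$. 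Any of these contradicts (O1), (O2), or \eqref{pnotmove} respectively, once one has also checked that $\Lambda(f')\supseteq\Lambda(f)$. Proving that a full accessible class must always block a $\Phi$-vertex or $p$ in this way is the main content here, and this is where the minimality of $P$, Lemma~\ref{pr-1} (which forces every vertex of $B+p$ to send at least $a=2$ edges into $A$), and the edge bound \eqref{eq:bibound} enter.

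For (i), let $\wt\lambda=\emptyset$ with $\lambda\in\Lambda$. First, $L$ is not plain: otherwise, for each used colour $\gamma$, any vertex of $\wt\gamma$ would witness the arc $\gamma\lambda$ (its list is all of $\Gamma\ni\lambda$ and $\wt\lambda=\emptyset$), so every used colour would be accessible, giving $\Phi=\emptyset$ and $a=r\ge 9$, contradicting $a\le 2$. Since $\lambda\in\Gamma=\bigcup_x L(x)$, some vertex $w$ has $\lambda\in L(w)$, and $w\ne p$ by \eqref{pnotmove} (else $p$ is movable to the accessible class $\wt\lambda$); writing $\nu=f(w)$, the arc $\nu\lambda$ shows $\nu\in\Lambda$. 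If some such $w$ lies in a class of size $1$, that class is the required $\wt{\lambda'}$. Otherwise every vertex with $\lambda$ on its list lies in an accessible class of size at least $2$; pick a suitable such $w$ and move it into $\wt\lambda$, obtaining $f'$ (here $s\ge 2$, since for $s=1$ the previous case applies). Then $\wt\lambda(f')=\{w\}$ and $\wt\nu(f')$ are nonempty light classes, so $f'$ is SE, $\nu\in\Lambda_0(f')$, and $\widehat\Lambda(f')$ has one more nonempty class than $\widehat\Lambda(f)$. To contradict (O2) it remains to check $\Lambda(f')\supseteq\Lambda(f)$, which by (O1) then forces equality: a minimum-length witnessing path of $H(f)$ that uses $w$ as a witness must first reach $\nu\in\Lambda_0(f')$, and one that ends at $\lambda$ can be re-read using $\lambda\in\Lambda_0(f')$, so the only genuine obstruction is that the arc entering $\wt\lambda$ along such a path is destroyed because its witness is a neighbour of $w$ --- which is averted by the choice of $w$, or by a short case analysis.

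The step I expect to be the main obstacle is the bookkeeping of the auxiliary digraph $H$ under these local modifications: throughout, one must verify that the accessible set $\Lambda$ --- equivalently $\Phi$, equivalently $|A|$ --- does not decrease, since only then can the extremal conditions (O1) and (O2) be invoked. The counting parts (all of (iii), and (ii) when $a=1$) are immediate; the substance is the $a=2$ case of (ii) and the whole of (i), each of which comes down to showing that a wasteful feature of $f$ --- a full accessible class, or an empty accessible class unaccompanied by a singleton one --- can always be converted into a strict improvement of $f$ with respect to (O1)--(O2).
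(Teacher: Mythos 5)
Your part (iii) and the $a=1$ half of part (ii) reproduce the paper's counting argument exactly ($a\le 2$ from Lemma~\ref{lem:BB}(cii) and $|A|\le as-1$), and your part (i) is, modulo extra bookkeeping about preserving $\Lambda$, the paper's one-line argument: any vertex $w$ with $\lambda\in L(w)$ lies in $A$ by \eqref{pnotmove} and Lemma~\ref{pr-1}(ii), and moving it into the empty class $\wt{\lambda}$ contradicts (O2) unless $|\wt{w}|=1$. (The accessibility-preservation worry you raise in (i) is left implicit in the paper as well, so you are at parity there.) The real problem is part (ii) in the case $a=2$, $|\Lambda|\ge 3$.

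There you shift witnesses along a minimal $\lambda,\Lambda_0$-path and then need the new coloring to be a strict improvement; you propose that deleting the first witness $v_1$ from the full class $\wt{\lambda}$ must either unblock an arc into $\lambda$ from some $\tau\in\Phi$, or make $p$ movable, or create a new nonempty class --- and you explicitly defer the proof of this trichotomy. But none of these need happen: if the path is a single arc $\lambda\beta$ with $|\wt{\beta}|=s-1$, the shift merely transfers fullness from $\wt{\lambda}$ to $\wt{\beta}$, the counts relevant to (O1) and (O2) are unchanged, and nothing forces $v_1$ to have been blocking a vertex of $B$ or to be the unique neighbour of $p$ in $\wt{\lambda}$. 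So the step you flag as ``the main content'' is genuinely missing, and the mechanism you sketch for supplying it is not the right one. What actually closes this case in the paper is a counting step your argument never uses: with $a=2$ the bound $|A|\le 2s-1$, combined with part (i) (which you proved but never feed back into part (ii)), forces every accessible class other than the single full one to have size at most $s-2$; hence the first arc of the witnessing path leads from the full class into a class of size at most $s-2$, and that one move leaves \emph{every} class of $\wh{\Lambda}$ light, which is what lets the extremal choice of $f$ be contradicted. Without this size bound (or a substitute), the case $a=2$, $|\Lambda|>2$ of part (ii) remains unproved in your write-up.
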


\begin{proof}
\ref{enu:eset} Suppose $\lambda\in \Lambda$ with $\wt \lambda =\emptyset$. Now there is $v\in V$ with $\lambda\in L(v)$, and $v$ is movable to $\wt \lambda=\emptyset$. By \eqref{pnotmove} and Lemma~\ref{pr-1}\ref{prii}, $v\in A$. Moving $v$ to $\wt \lambda$  contradicts \ref{enu:2} unless $|\wt v|=1$.

 By Lemma~\ref{lem:BB}(cii), $a\le2$. Now $|A|\le as-1$. If $a=1$ then there are no full classes. Else $a=2$, and  
 $|\Lambda|\geq3$.
 Let $\widetilde{\lambda}_{1}$, $\widetilde{\lambda}_{2}$,
$\widetilde{\lambda}_{3}$ be the three largest classes in $\wh\Lambda$, arranged in non-increcreasing order. Suppose $|\wt\lambda_{1}|=s$. By \ref{enu:eset},
if $\widetilde{\alpha}_{3}=\emptyset$ then 
$|\wt\lambda_{2}|=1$; else $|\wt\lambda_{2}|\le2s-1-|\wt\lambda_{1}|-|\wt\lambda_{3}|\le s-2$. Anyway $|\wt\lambda_{i}|\le|\wt\lambda_{2}|\le s-2$ for all $i\ge 2$.
As  $\wt\lambda_{1}$ is accessible, there is $v\in \wt\lambda_{1}$ with $v$ movable to some class of size at most $s-2$. This contradicts \ref{enu:1}.

\ref{enu:plain}  As  $|A|\le as-1$ and $a\le 2$,  there are only enough vertices for one full class, and only when $a=2$.
\end{proof}

For $z\in A$ and $y\in B$, call $y$ a \emph{solo neighbor}
of $z$ if $z$ blocks $y$, \emph{i.e.,} $f(z)\in L(y)$ and $N(y)\cap\widetilde{z}=\{z\}$; 
call $z$ a \emph{solo vertex} if it has a solo neighbor, and 
additionally, if there is a full class in
$\Lambda$, then it is $\wt z$. 
Let $S_{z}$ be the 
set of solo neighbors of $z$. Call $y\in S_{z}$ \emph{useful} if $yy'\notin E$
for some $y'\in S_{z}-y$, and let $S_{z}^{*}$ be the set of useful $y\in S_{z}$. 

From an algorithmic point of view, Lemmas~\ref{lem:soloToA}, \ref{lem:Useful} and \ref{lem:S*capD} below provide methods for improving the SE $L$-coloring $f$. As we are assuming $f$ is optimal, they are phased negatively.

\begin{lem}
\label{lem:soloToA}If $z$ is a solo vertex  then
\begin{enumerate}
\item \label{lta1}$z$ is
not movable to any class in $\widehat{\Lambda}$;
\item \label{lta2} $\|z,A\|\geq|L(z)\cap \Lambda|-1\geq a-1$;
\item \label{lta3} $\|z,B\|\leq |L(z)\cap\Phi|+1\le b+1$; and 
\item \label{lta4} if $\|z,B\|=b+1$ then $\Phi\subseteq L(z)$.
\end{enumerate}
\end{lem}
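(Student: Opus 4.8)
I want to prove the four parts of Lemma~\ref{lem:soloToA} about a solo vertex $z$ (so $z\in A$, $z$ has a solo neighbor, and if $\Lambda$ has a full class it is $\wt z$). The key tension to exploit is that moving $z$ out of its class $\wt z$ would make $\wt z$ smaller (good), but it might destroy the ``solo'' status of $z$'s solo neighbors, i.e., a solo neighbor $y$ of $z$ could suddenly become movable to $\wt z$ and repair the coloring. So the proof is really: ``if $z$ could move into $\widehat\Lambda$, then using $z$'s solo neighbor we could construct an improvement to $f$ contradicting the optimality conditions \ref{enu:1}--\ref{enu:2} (or contradicting \eqref{pnotmove}).''

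\textbf{Part (a).} First I would handle the case $\wt z$ is not full. Suppose $z$ is movable to some class $\wt\mu$ with $\mu\in\Lambda$. Since $\mu\in\Lambda$ there is a path $P$ witnessing $\mu$ is accessible; move $z$ to $\wt\mu$ and shift witnesses along $P$. The class $\wt z$ loses $z$ and becomes light (or stays light), while the terminal light class of $P$ grows by one and remains non-overfull; everything in between is balanced. The new coloring $f'$ is still SE (same counting argument as in the main set-up: at most one light class got filled, and this is permissible). But now I have a non-overfull class $\wt z(f')$ that is strictly smaller than before and contains (in $G$) a neighbor $y$ of $p$ only through... — more to the point, in $f'$ the solo neighbor $y$ of $z$ has $N(y)\cap\wt z(f')=\emptyset$, so $y$ becomes movable to $\wt z(f')$; if $\wt z(f')$ is accessible in $H(f')$ this contradicts Lemma~\ref{pr-1}\ref{prii}-type reasoning applied to $f'$, or more directly it lets $p$ (or a $B$-vertex) reach a light class — I need to track carefully that $A(f')$ is at least as large as $A(f)$ so \ref{enu:1} is not violated, and that the new nonempty-class count does not drop, so \ref{enu:2} forces a contradiction. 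The full-class case (when $\wt z$ is the unique full class of $\Lambda$) is the delicate one: moving $z$ out makes $\wt z$ light, which should only help, but I must recheck the SE count since we might have been at the edge of the allowed number of full classes; Lemma~\ref{lem:BB}(cii) gives $a\le 2$ and Lemma~\ref{lem:Free}\ref{enu:plain} controls full classes in $\Lambda$, which should suffice.

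\textbf{Parts (b), (c), (d).} These follow from (a) by a counting argument on the edges of $z$. By (a), $z$ is not movable to any class in $\widehat\Lambda$; equivalently, for every $\lambda\in L(z)\cap\Lambda$ with $\lambda\ne f(z)$, $z$ has a neighbor in $\wt\lambda$. That gives at least $|L(z)\cap\Lambda|-1\ge a-1$ distinct neighbors, one per such class, all lying in $A$ — this is (b). For (c): $\|z,B\|=d(z)-\|z,A\|\le \Delta(G)-(a-1)\le r-a+1=b+1$, using $\Delta(G)\le r$ and \ref{lta2} — this is (c). For (d), equality $\|z,B\|=b+1$ forces $d(z)=r$ and $\|z,A\|=a-1$ exactly, meaning $z$ has a neighbor in $\wt\lambda$ for \emph{every} $\lambda\in\Lambda\cap L(z)$, $\lambda\ne f(z)$, and none to spare; then the $b+1$ edges to $B$ must in particular hit every class of $\widehat\Phi$ (there are $b/s$ classes... actually I should count classes not vertices): since $z$ has exactly $d(z)=r$ neighbors and they are distributed with at least one in each of the $|L(z)\cap\Lambda|-1$ accessible classes meeting $L(z)$, a pigeonhole/parity argument shows $L(z)\supseteq\Lambda\cap L(z)$ fully and all remaining colors of $L(z)$, which are in $\Phi$, must be all of $\Phi$, i.e.\ $\Phi\subseteq L(z)$ — I will need \eqref{eq:|A|>s} / plainness to clinch that $|L(z)\cap\Phi|=|\Phi|=b$ exactly when the edge count is $b+1$ with one extra edge inside a $\Phi$-class.

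\textbf{Main obstacle.} The routine parts are (b)--(d); the real work is part (a), and within it the bookkeeping that shifting witnesses along $P$ together with moving $z$ (i) keeps the coloring SE, (ii) does not shrink $A$, and (iii) either increases the nonempty-class count or produces a movability contradiction via the solo neighbor $y$. The subtlety is that after moving $z$, the digraph $H$ changes — $z$ may block new in-edges at $\wt\mu$ and stop blocking in-edges at $\wt z$ — so I must argue the solo neighbor $y$ genuinely becomes usable and that $\wt z(f')$ is reachable from a light class in $H(f')$, handling separately the sub-case where $\wt z$ was the unique full class of $\Lambda$ (where Lemma~\ref{lem:BB}(cii) and Lemma~\ref{lem:Free} pin down the structure tightly enough to avoid an off-by-one in the SE count).
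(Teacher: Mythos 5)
There is a genuine gap, and it sits exactly where you place the ``main obstacle'': part (a). Your operation --- move $z$ to $\wt\mu$ and shift witnesses along an accessibility path $P$ ending at a light class $\wt\lambda_t$ --- does not force a contradiction. After this operation $\wt\lambda_t$ gains a vertex and may become full and inaccessible in $H(f')$, costing up to $s$ vertices of $A$; the gain you hope for, namely $\wt y\subseteq A(f')$ because the solo neighbor $y$ is now movable to the light class $\wt z(f')$, is exactly $s$ vertices. So the accounting can end in the tie $|A(f')|=|A|$ (and the nonempty-class count for \ref{enu:2} can tie as well), and neither optimality condition is violated. You flag this but do not resolve it. The paper's proof uses a different, sharper move that your sketch misses: since $y$ is a \emph{solo} neighbor, one can move $z$ to $\wt\lambda$ and \emph{simultaneously} move $y$ into $\wt z - z$. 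This requires no witness-shifting at all: $|\wt z|$ is unchanged, every class of $\wh\Lambda$ other than $\wt z$ is light because $z$ is solo (so $\wt\lambda+z$ is not overfull and the full-class count does not rise), and the $\Phi$-class $\wt y$ drops to size $s-1$ and becomes light. Hence $A\cup\wt y+y-z\subseteq\wt\Lambda(f')$ --- a strict gain of $s-1>0$ vertices --- unless $\wt\lambda(f')$ is full; that forces $a=2$, whereupon $\Phi-f(y)$ is a sink of size $r-3$, contradicting Lemma~\ref{lem:BB}(ci). This is the key idea of the lemma and it is absent from your plan.

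Parts (b) and (c) are fine as you describe them (though to get the intermediate bound $\|z,B\|\le|L(z)\cap\Phi|+1$ stated in (c) you should write the chain as $\|z,B\|\le d(z)-\|z,A\|\le|L(z)|-|L(z)\cap\Lambda|+1=|L(z)\cap\Phi|+1$ rather than passing through $\Delta(G)\le r$). Part (d), however, is argued the wrong way: no pigeonhole on the classes of $\wh\Phi$ and no appeal to plainness or \eqref{eq:|A|>s} is needed, and your proposed route (the $b+1$ edges ``hit every class of $\wh\Phi$'') would not even yield a conclusion about $L(z)$, which is what (d) asserts. The correct argument is one line from the chain above: $\|z,B\|=b+1$ forces $|L(z)\cap\Phi|+1\ge b+1$, i.e.\ $|L(z)\cap\Phi|\ge b=|\Phi|$, so $\Phi\subseteq L(z)$.
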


\begin{proof}
Suppose $z$ movable to $\widetilde{\lambda}\in \wh \Lambda$ and $y\in S_z$.
Obtain a new SE $L$-coloring $f'$ by moving $z$ to $\widetilde{\lambda}$ and $y$ to $\wt z$. 
 Now $\wt{\lambda}+z$ is not overfull because $z$ is solo, 
 and so  $\wt z$ is the only possibly full class in 
 $\wt{\Lambda}$. Also, since $\wt y-y$ is now light, the 
 number of full classes has not increased. 
 By \ref{enu:1}, $\wt\lambda(f')$ is full. 
 Thus $a=2$. By \ref{enu:1} and the definition of 
 $\Lambda$, $\Phi-f(y)$ is a sink, contradicting Lemma~\ref{lem:BB}(ci). 
 So \eqref{lta1} holds, and
\begin{align*}
\|z,A\|&\geq|(L(z)\cap \Lambda)-f(z)|\geq a-1~\text{and}\\
\|z,B\|&\leq r-|L(z)\cap\Lambda|+1
=|L(z)\cap\Phi|+1\le b+1.
\end{align*}
Thus \eqref{lta2} and \eqref{lta3} hold. If $\|z,B\|=b+1$, then $|L(z)\cap \Phi|=r-|L(z)\cap \Lambda|=b$, so $\Phi\subseteq L(z)$.
\end{proof}
\begin{lem}
    \label{Af'} Let $z$ be a solo vertex with $y\in S^*_z$ and  $f(z)=\lambda$. If  $f'$ is an equitable $L$-coloring such that $\wt{\gamma}(f')=\wt{\gamma}$
    for all $\gamma\in \Lambda-\lambda$ and 
    $\wt{\lambda}(f')=\wt{\lambda}+ y-z$ then $\wt{\Lambda}+y-z\subseteq \wt{\Lambda}(f')$.
 \end{lem}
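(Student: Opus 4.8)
The goal of Lemma~\ref{Af'} is to show that if we modify the SE coloring $f$ only by removing $z$ from its class $\wt\lambda$ and inserting the solo neighbor $y\in S^*_z$ (keeping all other classes indexed by $\Lambda-\lambda$ fixed, and producing an equitable $L$-coloring $f'$), then \emph{accessibility is preserved}: every color that was accessible under $f$ (i.e. in $\Lambda$) remains accessible under $f'$, so $\wt\Lambda+y-z\subseteq\wt\Lambda(f')$. The natural strategy is to take an arbitrary $\mu\in\Lambda$ together with a witnessing path $P=\mu=\mu_1\mu_2\dots\mu_k$ in $H(f)$ ending at a light class $\wt\mu_k\in\widehat\Lambda_0(f)$, and argue that $P$ is still (essentially) a valid accessibility path in $H(f')$. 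Each arc $\mu_i\mu_{i+1}$ of $P$ is witnessed by some $v_i\in\wt\mu_i$ with $\mu_{i+1}\in L(v_i)$ and $N(v_i)\cap\wt\mu_{i+1}=\emptyset$. The only classes whose vertex-sets changed from $f$ to $f'$ are $\wt\lambda$ (lost $z$, gained $y$) and $\wt{f(y)}$ (lost $y$); crucially $f(y)\in\Phi$, so $\wt{f(y)}$ is not a class of $\widehat\Lambda$ at all. So along $P$, the only place trouble can arise is where $P$ passes through $\lambda$.

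First I would dispose of the case $\lambda\notin V(P)$: then every $v_i$ is still in its class under $f'$, and for each arc the blocking condition $N(v_i)\cap\wt\mu_{i+1}=\emptyset$ still holds because $\wt\mu_{i+1}(f')=\wt\mu_{i+1}$ (as $\mu_{i+1}\ne\lambda$), and $\wt\mu_k$ is still light (adding/removing a vertex only at $\wt\lambda$, and $\lambda\notin V(P)$, so if $\mu_k\ne\lambda$ its size is unchanged). Hence $P$ itself witnesses $\mu\in\Lambda(f')$. The substantive case is $\lambda=\mu_j$ for some $j\in[k]$; here I would split further by whether $\lambda$ is the last vertex $\mu_k$, an internal vertex, or the first vertex $\mu_1=\mu$ — though since $\mu$ is arbitrary in $\Lambda$, it suffices to handle ``$\lambda$ appears on $P$'' with $P$ a shortest such path (so $\lambda$ appears only once). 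The idea is to \emph{repair} $P$ at $\lambda$: we need a witness for the incoming arc $\mu_{j-1}\lambda$ (some $v_{j-1}\in\wt\mu_{j-1}$ with $N(v_{j-1})\cap\wt\lambda(f')=\emptyset$) and for the outgoing arc $\lambda\mu_{j+1}$ (some vertex in $\wt\lambda(f')$ movable to $\wt\mu_{j+1}$). For the outgoing arc, note $\wt\lambda(f')=\wt\lambda+y-z$ still contains $\wt\lambda-z$; if the original witness $v_j\in\wt\lambda$ was $\ne z$ it still works (since $\wt\mu_{j+1}$ is unchanged), and if $v_j=z$ one uses that $y$ is a \emph{solo} neighbor of $z$ — $N(y)\cap\wt\lambda=\{z\}$, hence $N(y)$ meets $\wt\lambda(f')$ in nothing... wait, this needs care — or else one invokes that $z$ being movable to $\wt\mu_{j+1}\in\widehat\Lambda$ is forbidden by Lemma~\ref{lem:soloToA}\eqref{lta1}, so in fact $v_j\ne z$ automatically. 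That is the clean route: by Lemma~\ref{lem:soloToA}\eqref{lta1}, $z$ is not movable to any class in $\widehat\Lambda$, so $z$ is never a witness for an \emph{outgoing} arc of $\lambda$ along any accessibility path, forcing $v_j\ne z$; thus the outgoing arc survives verbatim.

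For the \emph{incoming} side and for the endpoint, I would handle the subtleties caused by $y$ entering $\wt\lambda$. If $\lambda=\mu_k$ is the light endpoint: $\wt\lambda(f')=\wt\lambda+y-z$, and since $z\in\wt\lambda$ originally, $|\wt\lambda(f')|=|\wt\lambda|$, still light — good, but we also need the incoming arc $\mu_{k-1}\lambda$ to survive, i.e.\ some $v_{k-1}$ with $N(v_{k-1})\cap(\wt\lambda+y-z)=\emptyset$; the original gives $N(v_{k-1})\cap\wt\lambda=\emptyset$, and we need additionally $v_{k-1}\ne$ a neighbor of $y$ and $v_{k-1}\neq y$. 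Here is where $y\in S^*_z$ (\emph{useful}) and the structure should be exploited, possibly by rerouting: if $v_{k-1}$ is adjacent to $y$, we can instead move $z$ somewhere, or pick a different witness, or shorten/extend the path; I expect this to require the argument that because $f'$ is equitable and accessibility-maximality \ref{enu:1} holds, any obstruction would let us enlarge $A$, a contradiction. Honestly, the cleanest framing may be: \emph{if $\mu\notin\Lambda(f')$, then $\mu\in\Phi(f')$, and then $\Phi(f')$ together with the altered structure yields an SE $L$-coloring of $G'$ with strictly larger $\widetilde\Lambda$ than $f$ (using that $y\notin\wt\Lambda(f)$ while $z\in\wt\Lambda(f)$, and $y$ being useful lets us avoid the blocking)}, contradicting \ref{enu:1}. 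The main obstacle is precisely this repair at the incoming arc and endpoint of $P$: ensuring that inserting $y$ into $\wt\lambda$ does not block the arc of $P$ that lands on $\lambda$ nor spoil lightness of the endpoint — and the tool for it is the \emph{usefulness} hypothesis $y\in S^*_z$ (there is $y'\in S_z$ with $yy'\notin E$) combined with optimality \ref{enu:1}, \ref{enu:2} of $f$ and Lemma~\ref{lem:soloToA}. I would write this as: assume for contradiction $\mu\in\Lambda(f)\setminus\Lambda(f')$, take the first vertex of $P$ (in the direction toward $\Lambda_0$) that is ``broken'' in $H(f')$, locate the break at $\lambda$, and use the solo/useful structure to reroute or to directly build a better coloring, contradicting the choice of $f$.
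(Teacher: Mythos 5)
There is a genuine gap, and it sits exactly where you flagged ``the main obstacle'': the repair of the incoming arc and the endpoint of your path $P$ when $y$ enters $\wt\lambda$. You never resolve it --- the proposal trails off into ``I expect this to require\dots'' and ``the cleanest framing may be\dots'' without supplying the argument, so as written the lemma is not proved. The missing idea is that no such path surgery is needed at all, because the \emph{definition} of a solo vertex already does almost all the work: $z$ being solo means that if any class of $\wh\Lambda$ is full, it is $\wt z=\wt\lambda$. Hence every class $\wt\gamma$ with $\gamma\in\Lambda-\lambda$ is light; since $\wt\gamma(f')=\wt\gamma$ (and $s$ is unchanged), each such $\gamma$ lies in $\Lambda_0(f')$ and is accessible under $f'$ via the \emph{trivial} path. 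So there is no incoming arc to repair and no endpoint whose lightness is in doubt --- the only color that might need a nontrivial witnessing path under $f'$ is $\lambda$ itself, and for it a path of length one suffices.

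For that one remaining step your proposal actually contains the right ingredient: if $\wt\lambda$ is full, then since $\lambda\in\Lambda$ there is a witness $w\in\wt\lambda$ for the first arc of a $\lambda,\Lambda_0$-path, i.e.\ $w$ is movable to some light class $\wt\mu$ with $\mu\in\Lambda-\lambda$; by Lemma~\ref{lem:soloToA}(a) $w\ne z$, so $w\in\wt\lambda(f')=\wt\lambda+y-z$, and since $\wt\mu(f')=\wt\mu$ is still light, the single arc $\lambda\mu$ witnesses $\lambda\in\Lambda(f')$. (If $\wt\lambda$ is light, $|\wt\lambda(f')|=|\wt\lambda|$ and nothing is needed.) This two-line argument is the paper's entire proof. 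Note also that the usefulness hypothesis $y\in S^*_z$, which you lean on for your attempted repair, plays no role here; it is carried in the statement only for the downstream applications of the lemma. Your general-path framework forces you to control $N(v_{j-1})\cap(\wt\lambda+y)$ for an arbitrary predecessor on $P$, which is a genuinely harder (and unnecessary) claim, and nothing in the hypotheses rules out $v_{j-1}\in N(y)$.
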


 \begin{proof}
Since $z$ is solo, every class  other than $\wt z$ is light, and so accessible.
 If $\wt z$ is full then there is a witness $w\in \wt \lambda$ with $w$ movable to some light class. By Lemma~\ref{lem:soloToA}\eqref{lta1},  $w\ne z$. Thus  $w\in \wt{\lambda}(f')$, and so $\wt{\lambda}(f')$ is accessible.
\end{proof}
\begin{lem}
\label{lem:Useful}If $z$ is a solo vertex, $y\in S_{z}^{*}$ and $f(y)\in L(z)$ then $\|z,\widetilde{y}\|\geq2$.
\end{lem}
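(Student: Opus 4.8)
The plan is to argue by contradiction and manufacture from the assumed counterexample an SE $L$-coloring $g$ of $G'$ whose accessible set $\wt{\Lambda}(g)$ strictly contains a translate of $\wt{\Lambda}(f)$, contradicting the maximality \ref{enu:1} of $f$. First I unpack the hypotheses. Since $y\in S_z^{*}\subseteq S_z$, the vertex $z$ blocks $y$, so $zy\in E$ and $N(y)\cap\wt{z}=\{z\}$; in particular $y\in N(z)\cap\wt{y}$, so $\|z,\wt{y}\|\ge1$. Thus if the conclusion fails then $\|z,\wt{y}\|=1$ and $y$ is the only neighbour of $z$ in $\wt{y}$. Because $y$ is useful, pick $y'\in S_z-y$ with $yy'\notin E$; then $z$ blocks $y'$ as well, so $zy'\in E$, and since $y$ is $z$'s only neighbour in $\wt{y}$ we get $y'\notin\wt{y}$. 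Write $\lambda:=f(z)\in\Lambda$, $\phi:=f(y)$ and $\phi':=f(y')$; note $\phi,\phi'\in\Phi$ are distinct and both differ from $\lambda$.

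The decisive step is to exchange the colours of $z$ and $y$: let $g$ be obtained from $f$ by setting $g(z):=\phi$ and $g(y):=\lambda$. This is an $L$-coloring, since $\phi\in L(z)$ (the remaining hypothesis) and $\lambda\in L(y)$ (as $z$ blocks $y$); it is proper, because a neighbour of $z$ of colour $\phi$ would lie in $N(z)\cap\wt{y}=\{y\}$ while $g(y)\ne\phi$, and a neighbour of $y$ of colour $\lambda$ would lie in $N(y)\cap\wt{z}=\{z\}$ while $g(z)\ne\lambda$. No class changes size, so $g$ is SE. Now Lemma~\ref{Af'}, applied with this $z,y$ and with $f':=g$ (note $\wt{\gamma}(g)=\wt{\gamma}$ for every $\gamma\in\Lambda-\lambda$, and $\wt{\lambda}(g)=\wt{\lambda}+y-z$), yields $\wt{\Lambda}(f)+y-z\subseteq\wt{\Lambda}(g)$. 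Since $|\wt{\Lambda}(f)+y-z|=|\wt{\Lambda}(f)|$, maximality \ref{enu:1} forces $\wt{\Lambda}(g)=\wt{\Lambda}(f)+y-z$; in particular $\lambda\in\Lambda(g)$, because $y\in\wt{\lambda}(g)\subseteq\wt{\Lambda}(g)$.

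To reach a contradiction it now suffices to prove that $\phi'$ is accessible in $H(g)$: indeed, that would give $\wt{\phi'}(g)\subseteq\wt{\Lambda}(g)=\wt{\Lambda}(f)+y-z$, whereas $\wt{\phi'}(g)=\wt{\phi'}(f)$ (since $\phi'\notin\{\lambda,\phi\}$) is a full, hence nonempty, class of $\wh{\Phi}(f)$ and so is disjoint from $\wt{\Lambda}(f)+y-z$. And $\phi'$ is accessible in $H(g)$ because $y'$ witnesses the arc $\phi'\lambda$: we have $g(y')=\phi'$, $\lambda\in L(y')$ (as $z$ blocks $y'$), and $N(y')\cap\wt{\lambda}(g)=N(y')\cap(\wt{\lambda}-z+y)=\emptyset$ (using $N(y')\cap\wt{z}=\{z\}$ and $yy'\notin E$); prepending this arc to a $\lambda,\Lambda_{0}$-path in $H(g)$, which exists since $\lambda\in\Lambda(g)$, exhibits $\phi'$ as accessible.

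The only genuinely delicate point is the invocation of Lemma~\ref{Af'}: it is what certifies that swapping the colours of $z$ and $y$ loses no accessible colour, and it relies on $z$ being a \emph{solo} vertex, so that every class of $\wh{\Lambda}$ other than $\wt{z}$ is light and hence remains accessible when we pass from $f$ to $g$. Everything else should be routine---verifying that $g$ is a proper SE $L$-coloring, and checking the three small conditions that make $y'$ witness the arc $\phi'\lambda$. Note that no case analysis is needed: since the argument never adds a vertex to $\wt{z}$, the situations in which $L$ is plain, or in which $\wt{z}$ is full, which would sink a cruder ``pack $y$ and $y'$ into $\wt{z}$'' attempt, cause no difficulty.
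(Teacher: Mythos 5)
Your proof is correct and takes essentially the same route as the paper: you perform the same $z\leftrightarrow y$ swap, invoke Lemma~\ref{Af'} to show no accessible class is lost, and use the useful partner $y'$ to make the full class $\widetilde{y}'$ accessible, contradicting \ref{enu:1}. You merely spell out details the paper leaves implicit (properness of the swap, $y'\notin\widetilde{y}$, and the verification that $y'$ witnesses the arc from $f(y')$ to $f(z)$ in $H(g)$).
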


\begin{proof}
Otherwise, $N(z)\cap\widetilde{y}=\{y\}$. Obtain an SE $L$-coloring $f'$
from $f$ by moving $z$ to $\widetilde{y}$ and $y$ to $\widetilde{z}$. By Lemma~\ref{Af'} $A\cup\widetilde{y}'+y-z\subseteq\widetilde{\Lambda}(f')$,
where $y'\in S_{z}^{*}$ with $yy'\notin E$, contradicting \ref{enu:1}. 
\end{proof}

\begin{lem}
\label{lem:S*capD}Suppose $z$ is a solo vertex, $y\in S_{z}^{*}$ and $z$ is movable
to $\widetilde{\beta}\in\widehat{\Phi}$. Then $f(y)$ is not reachable from $\beta$.
\end{lem}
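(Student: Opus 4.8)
**Proof proposal for Lemma~\ref{lem:S*capD}.**

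The plan is to argue by contradiction: suppose $z$ is a solo vertex, $y\in S_z^*$, $z$ is movable to $\widetilde\beta\in\widehat\Phi$, and $f(y)$ *is* reachable from $\beta$, witnessed by a directed $\beta,f(y)$-path $Q$ in $H$. Let $\lambda=f(z)$. The idea is to combine the ``local swap'' of $z$ and $y$ with a shift of witnesses along $Q$, and then invoke Lemma~\ref{Af'} to show that this new coloring has a strictly larger accessible part, contradicting the optimality condition \ref{enu:1}.

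First I would set up the new coloring $f'$ carefully. Move $z$ from $\widetilde\lambda$ into $\widetilde\beta$ (legal since $z$ is movable there, so $\widetilde\beta+z$ is not overfull — here one should check $\widetilde\beta$ is full, so $|\widetilde\beta+z|=s+1$; actually since $z$ solo means every class but $\widetilde z=\widetilde\lambda$ is light, but $\beta\in\Phi$ so $\widetilde\beta$ is full — this needs a moment of care about whether the move keeps things equitable, but moving $z$ out of $\widetilde\lambda$ and the subsequent shift should compensate). Then move $y$ from $\widetilde{f(y)}$ into $\widetilde\lambda$ (legal: $y\in S_z$ means $N(y)\cap\widetilde z=\{z\}$, and $z$ has just left $\widetilde\lambda$, so $\widetilde\lambda(f')\ni y$ is a proper class). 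Finally shift witnesses along $Q$ from $\beta$ to $f(y)$; since $Q$ starts at $\beta$ (whose class just gained $z$) and ends at $f(y)$ (whose class just lost $y$), the net effect of the shift is to rebalance these, so that every class along $Q$ keeps its size except $\widetilde\beta$ loses one and $\widetilde{f(y)}$ gains one back — i.e. after everything, $\widetilde\beta(f')=\widetilde\beta$ is full again, $\widetilde{f(y)}(f')=\widetilde{f(y)}$, and the only net changes are $\widetilde\lambda(f')=\widetilde\lambda+y-z$ (size $s$) and each intermediate class unchanged. One must double-check the witness for the first arc of $Q$ is not $z$ itself (it lies in $\widetilde\beta$, whereas $z$ has $f(z)=\lambda\ne\beta$, so this is fine) and that $y$ is not a witness on $Q$ (if it were, $y\in\widetilde{f(y)}$ witnessing an out-arc, but $y$ has been removed; however since $Q$ ends at $f(y)$, $y$ could only witness the first arc out of $f(y)$, which is not on $Q$ — so fine, possibly after choosing $Q$ to be a shortest such path). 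This yields an equitable $L$-coloring $f'$ with $\widetilde\gamma(f')=\widetilde\gamma$ for all $\gamma\in\Lambda-\lambda$ and $\widetilde\lambda(f')=\widetilde\lambda+y-z$.

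Now apply Lemma~\ref{Af'}: it gives $\widetilde\Lambda+y-z\subseteq\widetilde\Lambda(f')$. The final step is to show this containment is strict, i.e. that $z$ (or some new vertex) becomes accessible, or more simply that $|\widetilde\Lambda(f')|>|\widetilde\Lambda(f)|=|A|$, contradicting \ref{enu:1}. The point is that $\widetilde\Lambda+y-z$ has the same cardinality as $A$, so I need one genuinely new accessible vertex. The candidate is $z$, now sitting in $\widetilde\beta(f')$: because the path $Q$ shifted witnesses, the class $\widetilde\beta(f')$ together with the arc structure should make $\beta$ — hence $z$ — accessible in $H(f')$, via a path that walks from $\beta$ along (the reverse image of) $Q$, or via $\widetilde\lambda(f')$ which is now light-or-full but accessible by Lemma~\ref{Af'}. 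More precisely: $z\in\widetilde\beta(f')$ and $z$ is movable in $f'$ to $\widetilde\lambda(f')$? No — $\widetilde\lambda(f')$ has size $s$. Instead, since all of $\widetilde\Lambda+y-z$ is accessible and has room, and the shift along $Q$ ``opened up'' $\widetilde\beta$, one shows $\beta\in\Lambda(f')$. The cleanest route: the witnesses shifted along $Q$ show each arc of $Q$ still exists in $H(f')$ in the relevant direction, so $f(y)$ — which is light, hence in $\Lambda(f')$ — is reachable-from implies $\beta\in\Lambda(f')$, hence $B(f')\subsetneq B$, i.e. $A(f')\supsetneq A$, contradiction.

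The main obstacle I expect is the bookkeeping in the second and third paragraphs: verifying that $f'$ is genuinely equitable (no overfull class) after the three-stage modification, and — the real crux — pinning down precisely why $\beta$ joins the accessible set in $H(f')$ rather than merely why $|\widetilde\Lambda(f')|\ge|A|$. The subtlety is that shifting witnesses along $Q$ can destroy some arcs of $H$ (a shifted witness $v_i$ may no longer witness its old out-arc, and may now block in-arcs at its new class), so I cannot blindly claim ``$Q$ survives in $f'$''; I will instead need to argue that the reversed role — $f(y)$'s class being light and each $v_i$ now available at $\widetilde\gamma_i(f')$ to witness the arc it previously witnessed, or that $z$ at $\widetilde\beta$ provides a fresh out-arc into the now-vacated spot — produces the required $\beta,\Lambda_0$-path. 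This is exactly the type of argument Lemma~\ref{Af'} is designed to encapsulate for the single class $\widetilde\lambda$; extending its reach to $\widetilde\beta$ along $Q$ is where I'd spend the most care, likely by choosing $Q$ minimal and tracking only the first arc out of $\beta$.
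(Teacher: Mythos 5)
Your construction of the modified coloring $f'$ (move $z$ to $\widetilde{\beta}$, shift witnesses along the $\beta,f(y)$-path, move $y$ to $\widetilde{z}$) is exactly the paper's, and your size bookkeeping is right: every class retains its size, so $f'$ is SE, and Lemma~\ref{Af'} applies because the path stays inside $\Phi$ ($\Phi$ is a sink), so only $\widetilde{\lambda}$ and classes of $\Phi$ change. The gap is in the final step, which you yourself flag as the ``real crux'' and never close. You try to contradict \ref{enu:1} by showing that $\beta$ becomes accessible in $H(f')$, arguing that the arcs of $Q$ survive the shift and that $\widetilde{f(y)}$ is light. Neither holds: $f(y)\in\Phi$, so $\widetilde{f(y)}$ is full before and after the swap (it loses $y$ but gains the last witness), and shifting witnesses can destroy the arcs of $Q$, so there is no reason $\beta$ should be accessible in $H(f')$. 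As stated, your argument does not produce a strict enlargement of the accessible set.

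The missing idea is to use the hypothesis $y\in S_z^{*}$, i.e.\ the \emph{usefulness} of $y$, which you never invoke after the setup. Since $y$ is useful there is $y'\in S_{z}^{*}-y$ with $yy'\notin E$. In $f'$ the vertex $y'$ still lies in its original class $\widetilde{f(y')}\in\widehat{\Phi}$, has $\lambda=f(z)\in L(y')$, and has no neighbor in $\widetilde{\lambda}(f')=\widetilde{\lambda}+y-z$ (its unique neighbor in $\widetilde{\lambda}$ was $z$, which left, and $y\not\sim y'$). So $y'$ witnesses the arc $f(y')\lambda$ in $H(f')$, and since $\widetilde{\lambda}(f')$ is accessible by Lemma~\ref{Af'}, the entire previously inaccessible class $\widetilde{f(y')}(f')$ joins $\widetilde{\Lambda}(f')$. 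Together with $A+y-z\subseteq\widetilde{\Lambda}(f')$ this gives $|\widetilde{\Lambda}(f')|>|A|$, contradicting \ref{enu:1}. With this replacement for your last paragraph the proof is complete; without it the argument does not go through.
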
 

\begin{proof}
Otherwise there is a $\beta,f(y)$-path $P$ in $H$. Obtain a
new SE $L$-coloring $f'$ by moving $z$ to $\widetilde{\beta}$, switching
witnesses along $P$ and moving $y$ to $\widetilde{z}$. 
By Lemma~\ref{Af'},
 $A+y-z\subseteq\widetilde{\Lambda}(f')$. But
$y$ is nonadjacent to some $y'\in S_{z}^{*}$. So $\widetilde{y}'(f')\subseteq\widetilde{\Lambda}(f')$,
contradicting \ref{enu:1}. 
\end{proof}

Call $f$ \emph{extreme} if there is a solo vertex $z_0$ 
and there are 
 subsets $\Theta_{0},\Upsilon,\Upsilon'\subseteq \Phi$ 
such that
\begin{enumerate}[label=(E\arabic{enumi})]
\item \label{enu:E0}
if $y\in N(z_0)\cap B$ then $f(y)\in L(z)$; 
\item \label{enu:E2}$\emptyset\ne S^*_{z_0}\subseteq\wt\Upsilon$ and $|S_{z_0}^{*}\cap\widetilde{\upsilon}|=2$
for all $\upsilon\in\Upsilon$;
\item \label{enu:E3}$N(z_0)\cap B\smallsetminus(S_{z_0}^{*}\cup\widetilde{\Theta}_{0})\subseteq\widetilde{\Upsilon}'$
and $\|z_0,\widetilde{\phi}\|\leq1$ for all $\phi\in\Upsilon'$; and
\item \label{enu:E1}$|\Theta_{0}|\leq2$ and $\Upsilon$ is reachable from every 
 $\phi\in\Phi\smallsetminus\Theta_{0}$.
\end{enumerate}

The following Lemma will be used to extend $f$ to an SE $L$-coloring of $G$.

\begin{lem}\label{lem:ext}
If $f$ is extreme then $G$ has an SE $L$-coloring.
\end{lem}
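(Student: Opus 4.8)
The goal is to take the vertex $p$ (which by \eqref{pnotmove} cannot be moved to any accessible class) and an extreme coloring $f$ with its solo vertex $z_0$ and sets $\Theta_0,\Upsilon,\Upsilon'\subseteq\Phi$, and to build an SE $L$-coloring of $G$ by a careful sequence of moves: move $z_0$ out of $\widetilde\lambda$ (where $\lambda=f(z_0)$), move $p$ into the vacated slot, redistribute the solo neighbors of $z_0$, and shift witnesses along paths in $H$ to repair the light class at the far end. First I would set up the combinatorial bookkeeping: since $f$ is extreme, by \ref{|sink|ii} we have $b\ge r-2$, $a\le 2$, and $|\Phi|=b\ge r-2\ge 7$, so $\Phi$ is large and $\Lambda$ is tiny (at most two colors). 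By Lemma~\ref{pr-1}\ref{priii}, $\delta(G)=d(p)\ge a$, and by Lemma~\ref{lem:BB}(b), $\delta(G)\le 6$. The crucial fact to exploit is that $p$ has a neighbor in every class $\widetilde\mu$ with $\mu\in L(p)\cap\Lambda$ (from \eqref{pnotmove}), but $p$ can have few neighbors overall, so most of $\Phi$ consists of classes where $p$ has no neighbor — these are the classes $p$ is movable to.

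**Key steps.** (1) Since $z_0$ is solo, every class except possibly $\widetilde z_0$ is light; pick $y_0\in S^*_{z_0}$ and let $y_0'\in S_{z_0}-y_0$ be nonadjacent to $y_0$. (2) Move $z_0$ out: I would try to move $z_0$ to a class $\widetilde\beta\in\widehat\Phi$ with $\beta\notin\Theta_0$ (possible because of \ref{enu:E3} and \ref{enu:E1}: outside $\widetilde\Theta_0\cup S^*_{z_0}\cup\widetilde{\Upsilon}'$, and with $\|z_0,\widetilde\phi\|\le 1$ on $\Upsilon'$, there must be a class $\widetilde\beta$ disjoint from $N(z_0)$ since $\|z_0,B\|\le b+1$ by Lemma~\ref{lem:soloToA}\eqref{lta3} while $z_0$ must also have $\ge a-1$ neighbors in $A$). (3) Having vacated a spot in $\widetilde\lambda$ by moving $z_0$ to $\widetilde\beta$: now I want to place $p$. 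If $p$ is movable into $\widetilde\lambda - z_0$ (i.e. $p$'s only neighbor in $\widetilde\lambda$ was $z_0$, or $p$ has no neighbor in $\widetilde\lambda$), do so. Otherwise use the solo structure: move $y_0$ into $\widetilde z_0$ (the old class of $z_0$) — this is where Lemma~\ref{Af'} guarantees $\widetilde\Lambda + y_0 - z_0\subseteq\widetilde\Lambda(f')$ — and then there is still a freed position to absorb $p$. (4) Repair: after moving $z_0$ to $\widetilde\beta$, the class $\widetilde\beta$ may become overfull; use \ref{enu:E1} — $\Upsilon$ is reachable from $\beta\in\Phi\setminus\Theta_0$, and $\Upsilon$ has a light class — to shift witnesses along a $\beta,\Upsilon$-path, restoring equitability. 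The interplay with \ref{enu:E2} (each $\widetilde\upsilon$, $\upsilon\in\Upsilon$, contains exactly two solo neighbors of $z_0$) is what lets the witness-shift not collide with the vertices being moved, and lets the arithmetic on full classes (at most $|G|\bmod r$ of them) work out — I expect a short case split on whether $\widetilde z_0$ was full and on whether $|G'|\bmod r=0$, just as in the paragraph before \eqref{pnotmove}.

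**Main obstacle.** The hard part will be step (3)–(4): simultaneously (i) finding room for $p$ in $\widetilde\lambda$ after removing $z_0$, given that $p$ might have other neighbors there — this is exactly why the hypothesis \ref{enu:E0} ($f(y)\in L(z_0)$ for every $y\in N(z_0)\cap B$) and the solo structure are needed, since they let us reroute $z_0$'s neighbors rather than $p$'s; and (ii) controlling the count of full classes through a chain of three or four simultaneous moves plus a witness shift, without ever creating an overfull class. The bookkeeping has to track: the class $\widetilde\lambda$ loses $z_0$ and gains $p$ (net zero, stays non-overfull since $\widetilde\lambda$ had size $\le s$); $\widetilde z_0$ — wait, $z_0\in\widetilde\lambda$ so $\widetilde z_0=\widetilde\lambda$; rather the class $\widetilde{y_0}\in\widehat\Phi$ loses $y_0$ (becomes light, good); $\widetilde\beta$ gains $z_0$ and then loses a witness to the shift; and the terminal light class of the $\beta,\Upsilon$-path gains one vertex. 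I would organize this as: first verify no class is overfull, then count full classes against $|G|\bmod r$ using the two cases on $|G'|\bmod r$. I expect that whenever a naive move fails, one of Lemmas~\ref{lem:soloToA}, \ref{lem:Useful}, \ref{lem:S*capD} (which forbid the "easy improvements" that would contradict optimality \ref{enu:1}–\ref{enu:2}) forces the configuration into precisely the shape handled by \ref{enu:E0}–\ref{enu:E1}, so the extremeness hypothesis is exactly strong enough to push the construction through.
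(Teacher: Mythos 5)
There is a genuine gap, and it originates in your step (1): you assert that ``since $z_0$ is solo, every class except possibly $\widetilde z_0$ is light.'' That is only true of classes in $\widehat\Lambda$; by construction every class in $\widehat\Phi$ is full. Consequently your repair step (4) cannot work: $\Upsilon\subseteq\Phi$, so $\widehat\Upsilon$ contains no light class, and more fundamentally $\Phi$ is by definition the set of colors from which no light class is reachable in $H$, so no amount of witness-shifting inside $\widehat\Phi$ can dissipate the overfull class you create at $\widetilde\beta$. The whole difficulty of the lemma is to transfer the excess from the $B$-side to the $A$-side, and the only available mechanism is the solo-vertex swap: after the excess has been pushed along a $\phi_1,\Upsilon$-path into $\widetilde\upsilon$, one exchanges $z_0$ with a useful solo neighbor $y\in S^*_{z_0}\cap\widetilde\upsilon$ (this is where \ref{enu:E0} and \ref{enu:E2} enter), then uses the second solo neighbor $y'\in\widetilde\upsilon$ to push the excess into $\widetilde{f(z_0)}$, which by Lemma~\ref{Af'} is still accessible, and finally shifts witnesses through $\widehat\Lambda$ to a genuinely light class. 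Your outline contains no such crossing from $B$ to $A$, so it cannot terminate.

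Two further steps are also unjustified. First, step (2) needs a class $\widetilde\beta$ with $\beta\in\Phi\smallsetminus\Theta_0$, $\beta\in L(z_0)$ and $N(z_0)\cap\widetilde\beta=\emptyset$; nothing in the definition of extreme provides this, and in the situation where the lemma is actually applied, $\Theta_0$ is chosen as $\cl^{+}(\Psi)$ precisely so that every class $z_0$ is movable to lies in $\widehat\Theta_0$ --- so such a $\beta$ does not exist (and moving $z_0$ into the sink $\Theta_0$ would strand the excess there). Second, step (3) tries to insert $p$ into $\widetilde\lambda=\widetilde{f(z_0)}\in\widehat\Lambda$, which fights \eqref{pnotmove}: there is no reason that $\lambda\in L(p)$ or that $z_0$ is $p$'s only neighbor in $\widetilde\lambda$, and the fallback of moving $y_0$ into $\widetilde\lambda$ frees no slot anywhere $p$ is known to fit. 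The intended first move is the opposite one: since $d(p)=\delta(G)\le6$ and $r\ge9$, there are at least $r-d(p)\ge3>|\Theta_0|$ colors $\gamma\in L(p)$ with $N(p)\cap\widetilde\gamma=\emptyset$, all in $\Phi$ by \eqref{pnotmove}, so $p$ enters some $\widetilde\phi_1$ with $\phi_1\in\Phi\smallsetminus\Theta_0$ and the excess is routed to $\Upsilon$ by \ref{enu:E1}. (A final case split on whether the last witness $w_{h-1}$ is a neighbor of $z_0$, using \ref{enu:E3}, is also needed, but that is downstream of the issues above.)
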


\begin{proof}
By Lemma~\ref{lem:BB}(b), there are at least $r-d(p)\geq3$ classes to which $p$ can be moved. By \eqref{pnotmove},
none of them are in $\widehat{\Lambda}$. By \ref{enu:E1}, $p$ can be moved to some class
$\phi_{1}\in\Phi\smallsetminus\Theta_{0}$, and there is a $\phi_{1},\Upsilon$-path
$P=\phi_{1}\dots\phi_{h}$; say $\upsilon:=\phi_{h}$. For each $i\in[h-1],$ let
$w_{i}$ witness $\phi_{i-1}\phi_{i}$, and put $w_{0}:=p$. By \ref{enu:E2}, there 
are distinct $y,y'$ with $\{y,y'\}= S_{z_0}^{*}\cap\widetilde{\upsilon}$, see Figure~\ref{fig:case1}.
Put $\alpha:=f(z_0)$.

First suppose $w_{h-1}\notin N(z_0)$. Obtain an $L$-coloring $f'$ of $G$ by moving $p$ to $\phi_1$, switching
witnesses along $P$, moving $z_0$ to $\widetilde{\upsilon}+w_{h-1}-y$ and $y$ to $\widetilde{\alpha}-z_0$;
here we use \ref{enu:E0}. Using \ref{enu:E2},
this preserves
the size and independence of all classes, 
except that $z_0,y'\in\wt \upsilon$ and $|\wt{\upsilon}(f')|=|\wt{\upsilon}|+1$. 
Also, $y'$ witnesses that $\upsilon\alpha$ is an edge in $H(f')$. 
By Lemma~\ref{Af'}, $\widetilde{\alpha}(f')$ is accessible. Let $P$ be a path in $H(f')$  witnessing this. Switching witnesses along 
$\upsilon\alpha P$ yields an equitable coloring of $G$.

Otherwise $w_{h-1}\in N(z_0)$. By \ref{enu:E3}, 
$N(z_0)\cap\widehat{\phi}_{h-1}=\{w_{h-1}\}$.
Obtain a new $L$-coloring $f'$ by 
putting $z_0$ in $\widetilde{\phi}_{h-1}-w_{h-1}$,
$w_{h-1}$ in $\widetilde{\upsilon}-y$ and $y$ in $\widetilde{\alpha}-z_0$; again
we use \ref{enu:E0} to check that $\phi_{h-1}\in L(z_0)$. This operation preserves the 
size and independence of all color classes. 
Now 
$A+y-z_0\subseteq A(f')$ by Lemma~\ref{Af'}, and  
$y'$ is movable to $\wt{\alpha}(f')$. Thus
 $A\cup \wt{\upsilon}+y-z_0\subseteq A(f')$, contradicting \ref{enu:1}.  
\end{proof}

Finally, the last three lemmas of this section are used to show that $G$ has a solo vertex with enough solo neighbors and useful solo neighbors to gainfully apply the previous lemmas on improving $f$.

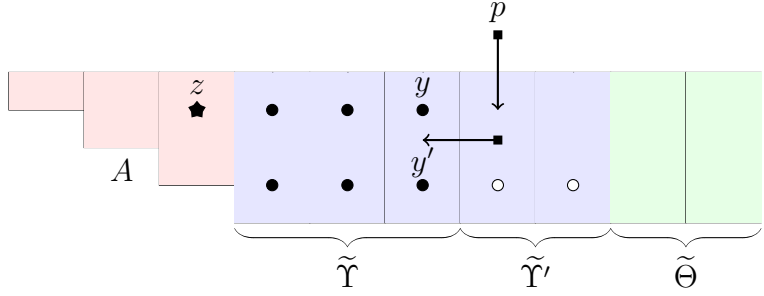
\begin{figure}[t]
    \centering
        \begin{tikzpicture}

    \draw (0,-.5) rectangle (1,0);
       \fill[red!10!white] (0,-.5) rectangle (1,0);
    \draw (1,-1) rectangle (2,0);
       \fill[red!10!white] (1,-1) rectangle (2,0);
    \draw (2,-1.5) rectangle (3,0);
       \fill[red!10!white] (2,-1.5) rectangle (3,0);
    \draw (2.5,-.5) node[vtx,scale=0.75,shape=star] (z){};
     \draw (2.5,-.5) node[yshift=0.3cm] {$z$};
     \draw (1.5,-1.3) node(A) {$A$};

     \foreach \x in {1,...,5} {
       \draw (\x+2,-2) rectangle (\x+3,0);
       \fill[blue!10!white] (\x+2,-2) rectangle (\x+3,0);
       \draw (\x+2.5, 0) node[vtx,scale=00] (\x){};
    }
    
    \draw (3.5,-1.5) node[vtx,scale=0.75] (y1){};
     \draw (3.5,-.5) node[vtx,scale=0.75] (y2){};
     
     \draw (4.5,-1.5) node[vtx,scale=0.75] (y3){};
     \draw (4.5,-.5) node[vtx,scale=0.75] (y4){};
     
     \draw (5.5,-1.5) node[vtx,scale=0.75] (y5){};
     \draw(5.5,-1.5)  node[yshift=0.3cm] {$y'$};
     \draw (5.5,-.5) node[vtx,scale=0.75] (y6){};
     \draw(5.5,-.5)  node[yshift=0.3cm] {$y$};
    
     \draw (6.5,-1.5) node[vtx, scale=0.75, shape=circle,fill=white] (y7){};

    
     \draw (7.5,-1.5) node[vtx, scale=0.75, shape=circle,fill=white] (y8){};
    
     \foreach \x in {8,9} {
       \draw (\x,-2) rectangle (\x+1,0);
       \fill[green!10!white] (\x,-2) rectangle (\x+1,0);
    }

    \draw (6.5, .5) node[vtx, scale=0.75, shape=rectangle,fill=black] (p){};
    \draw (6.5, .5)  node[yshift=0.3cm] {$p$};
    \draw [->,thick] (6.5, .5)  -- (6.5, -.5);
    \draw (6.5, -.9) node[vtx, scale=0.75, shape=rectangle,fill=black] (w){};
    \draw [->,thick] (6.5, -.9)  -- (5.5, -.9);
    
    \draw [decorate,
    decoration = {calligraphic brace,
        raise=5pt,
        aspect=0.5,
        amplitude=7pt}] (10,-1.9) -- (8,-1.9);
    
        \draw(9,-2.65) node {$\widetilde{\Theta}$};
        \draw [decorate,
    decoration = {calligraphic brace,
        raise=5pt,
        aspect=0.5,
        amplitude=7pt}] (8,-1.9) -- (6,-1.9);
    
        \draw(7,-2.65) node {$\widetilde{\Upsilon'}$};
        \draw [decorate,
    decoration = {calligraphic brace,
        raise=5pt,
        aspect=0.5,
        amplitude=7pt}] (6,-1.9) -- (3,-1.9);
    
        \draw(4.5,-2.65) node {$\widetilde{\Upsilon}$};
    \end{tikzpicture}
    \caption{Circles depict neighbors of $z$; solid circles depict useful solo neighbors. 
    }
    \label{fig:case1}
\end{figure}

Consider a function $\mu:\Phi\rightarrow[0,1]$, and set $\mu(\Phi)=\sum_{\beta\in \Phi}\mu(\beta)$.
Extend $\mu$ to $B=\widetilde{\Phi}$ by $\mu(y)=\mu\circ f(y)$. 
Now, define a
weight function $w: A\times B\rightarrow\mathbb{R}^{+}$ in
terms of $\mu$ by 
\begin{align*}
w(x,y) & =\begin{cases}
\frac{\mu(y)}{\|y,\widetilde{x}\|}, & \text{if }f(x)\in L(y)\text{ and }xy\in E\\
0, & \text{else}
\end{cases}.
\end{align*}
Finally, put $w(x)=\sum_{y\in B}w(x,y)$. 

\begin{lem}
\label{lem:w(z)}For every choice of $\mu:\Phi\rightarrow[0,1]$ \textup{there is} $\ensuremath{z\in A}$
with
\begin{equation}
w(z)=\ensuremath{\sum_{y\in B}w(z,y)>\mu(\Phi)}.\label{eq:solo}
\end{equation}
Moreover, if $L$ is plain then for all   $\lambda\in\Lambda$  either 
there is $z\in\wt{\lambda}$ satisfying \eqref{eq:solo} or both $\wt{\lambda}$ is full and $w(z)=\mu(\Phi)$ for every $z\in \widetilde{\lambda}$.
\end{lem}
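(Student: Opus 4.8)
The plan is to prove \eqref{eq:solo} by a weighting (discharging) argument: we will compute the total weight $\sum_{x\in A} w(x)$ in two ways and show it exceeds $|A|\cdot\mu(\Phi)$, so that some $z\in A$ must satisfy $w(z)>\mu(\Phi)$. The point is that for each $y\in B$, the vertex $y$ sends weight $w(x,y)$ to exactly those $x\in A$ with $f(x)\in L(y)$ and $xy\in E$; grouping the recipients by the class $\widetilde x$ they lie in, and using that $\|y,\widetilde x\|$ is precisely the number of neighbors of $y$ in that class, the total weight $y$ distributes to a single class $\widetilde\lambda$ with $\lambda\in L(y)$ is either $\mu(y)$ (if $y$ has a neighbor there) or $0$ (if it does not, i.e.\ if $\lambda\notin L(y)$ or $y$ is movable to $\widetilde\lambda$). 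Thus $\sum_{x\in A} w(x) = \sum_{y\in B}\mu(y)\cdot(\text{number of classes }\widetilde\lambda\text{ with }\lambda\in L(y)\text{ that }y\text{ has a neighbor in})$. By Lemma~\ref{pr-1}\ref{prii} every $y\in B$ is \emph{not} movable to any accessible class, so for every $\lambda\in\Lambda\cap L(y)$ the vertex $y$ has a neighbor in $\widetilde\lambda$; hence that count is $|L(y)\cap\Lambda|\ge a$ by Lemma~\ref{pr-1}\ref{pri}. Therefore $\sum_{x\in A} w(x)\ge a\sum_{y\in B}\mu(y) = a\cdot\mu(B)$, where $\mu(B)=s\cdot\mu(\Phi)$ since each class in $\widehat\Phi$ is full of size $s$ and $\mu$ is constant on classes.

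Now I would finish by comparing $a\cdot s\cdot\mu(\Phi)$ with $|A|\cdot\mu(\Phi)$. We have $|A|\le as-1<as$ (from $(a-1)s\le|A|\le as-1$ established in the set-up), so $\sum_{x\in A} w(x)\ge as\,\mu(\Phi) > |A|\,\mu(\Phi)$ whenever $\mu(\Phi)>0$; if $\mu(\Phi)=0$ the inequality \eqref{eq:solo} still must be checked, but then $\mu\equiv 0$, $w\equiv 0$, and we instead note the statement is about $w(z)>\mu(\Phi)=0$, which forces us to have assumed $\mu\not\equiv 0$ — so the genuinely interesting case is $\mu(\Phi)>0$, and there averaging over $A$ gives some $z\in A$ with $w(z)>\mu(\Phi)$. (One should handle the degenerate $\mu(\Phi)=0$ case by a remark, or simply observe the Lemma is applied only with $\mu\not\equiv0$.)

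For the ``moreover'' clause, assume $L$ is plain, so $\Gamma=\Lambda\cup\Phi$ has exactly $r$ colors, $L(y)=\Gamma$ for all $y$, hence $|L(y)\cap\Lambda|=a$ exactly, and the weight each $y\in B$ distributes within $A$ equals exactly $a\cdot\mu(y)$ with no slack from the list condition. So $\sum_{x\in A}w(x) = a\cdot\mu(B) = as\cdot\mu(\Phi)$, with equality throughout. Fix $\lambda\in\Lambda$. Summing $w(z)$ over $z\in\widetilde\lambda$ and comparing to the average: if every $z\in\widetilde\lambda$ had $w(z)\le\mu(\Phi)$, then $\sum_{z\in\widetilde\lambda}w(z)\le|\widetilde\lambda|\mu(\Phi)\le s\,\mu(\Phi)$; summing over all $\lambda\in\Lambda$ (there are $a$ of them in the plain case) gives $as\,\mu(\Phi)=\sum_{x\in A}w(x)\le\sum_\lambda|\widetilde\lambda|\mu(\Phi)=|A|\mu(\Phi)\le (as-1)\mu(\Phi)$, a contradiction when $\mu(\Phi)>0$ — unless equality is forced everywhere, which requires $|\widetilde\lambda|=s$ (i.e.\ $\widetilde\lambda$ full) and $w(z)=\mu(\Phi)$ for every $z\in\widetilde\lambda$ in each class where the strict inequality fails. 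Making this ``equality is forced'' accounting precise — tracking that a non-full class $\widetilde\lambda$ \emph{must} contain a $z$ with $w(z)>\mu(\Phi)$, and that in a full class either some $z$ beats $\mu(\Phi)$ or all tie it — is the one place requiring care.

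The main obstacle I anticipate is \emph{not} the counting identity itself (that is routine double counting) but the bookkeeping in the ``moreover'' part: correctly using that in the plain case $|\Lambda|=a$ exactly (from \eqref{eq:|A|>s}) versus in the general case $|\Lambda|$ may exceed $a$, and squeezing the right per-class conclusion out of a global averaging inequality when the slack $as-|A|$ is only $1$. The key enabling facts are Lemma~\ref{pr-1}\ref{pri} and \ref{prii} (every $y\in B$ is non-movable into accessible classes and meets $\ge a$ of them), and the size bounds $|B|=bs$, $(a-1)s\le|A|\le as-1$ from the set-up section.
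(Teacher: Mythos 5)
Your proof of the first assertion is correct and is essentially the paper's own argument: double count $\sum_{x\in A}\sum_{y\in B}w(x,y)$, use Lemma~\ref{pr-1}\ref{pri} and~\ref{prii} to see that each $y\in B$ sends exactly $\mu(y)$ into each class $\widetilde{\alpha}$ with $\alpha\in L(y)\cap\Lambda$ (hence at least $a\mu(y)$ into $A$ in total), and compare $as\,\mu(\Phi)$ against $|A|\le as-1$. Your caveat about the degenerate case $\mu(\Phi)=0$ is fair rather than a defect: the paper's proof also tacitly assumes $\mu(\Phi)>0$, which holds in both applications since there $\mu\ge 1/2$ pointwise and $\Phi\ne\emptyset$.

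The one genuine gap is in the ``moreover'' clause, and it is exactly the step you flagged as ``requiring care.'' Summing over all $\lambda\in\Lambda$ and comparing $as\,\mu(\Phi)$ with $|A|\mu(\Phi)\le(as-1)\mu(\Phi)$ only shows that \emph{some} accessible class contains a vertex $z$ with $w(z)>\mu(\Phi)$; it cannot yield the required conclusion for an \emph{arbitrary fixed} $\lambda\in\Lambda$, since a priori the surplus could be concentrated in one class while another receives too little. The repair uses an ingredient you already stated but did not deploy: when $L$ is plain, $\lambda\in L(y)$ for every $y\in B$, and by Lemma~\ref{pr-1}\ref{prii} $y$ has a neighbor in $\widetilde{\lambda}$, so $\sum_{x\in\widetilde{\lambda}}w(x,y)=\mu(y)$ exactly; hence $\sum_{x\in\widetilde{\lambda}}w(x)=\sum_{y\in B}\mu(y)=s\,\mu(\Phi)$ for each fixed $\lambda$ separately. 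Averaging inside $\widetilde{\lambda}$ alone then finishes: if $\widetilde{\lambda}$ is light, $|\widetilde{\lambda}|\le s-1$ forces some $z\in\widetilde{\lambda}$ to satisfy \eqref{eq:solo}; if $|\widetilde{\lambda}|=s$, either some $z\in\widetilde{\lambda}$ satisfies \eqref{eq:solo} or $w(z)=\mu(\Phi)$ for every $z\in\widetilde{\lambda}$. This per-class identity is precisely what the paper uses, and with it your argument closes.
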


\begin{proof}
For each $y\in B$ with $f(y)=:\phi$, 
\begin{align*}
\sum_{x\in A}w(x,y) & =
\sum_{\alpha\in L(y)\cap \Lambda}~
\sum_{x\in\widetilde{\alpha}}w(x,y)=
\sum_{\alpha\in L(y)\cap \Lambda}~
\sum_{x\in\widetilde{\alpha}\cap N(y)}
\frac{\mu(\phi)}{\|y,\widetilde{\alpha}\|}=
\sum_{\alpha\in L(y)\cap \Lambda}\mu(\phi)
\geq a\mu(\phi).
\end{align*} 
Thus 
\[
\sum_{x\in A}\sum_{y\in B}w(x,y)=\sum_{y\in B}\sum_{x\in A}w(x,y)\geq\sum_{y\in B}a\mu(f(y))=as\mu(\Phi).
\]
As $|A|\leq as-1$, there is $\ensuremath{z\in A}$ satisfying
(\ref{eq:solo}).

Finally, suppose $L$ is plain and  $\lambda\in\Lambda$. Now $\lambda\in L(y)$ for all $y\in V$. So for each $y\in B$, 
\begin{align*}
\sum_{x\in\widetilde{\lambda}}w(x,y)&=
\sum_{x\in\widetilde{\lambda}\cap N(y)}
\frac{\mu(f(y))}{\|y,\widetilde{\lambda}\|}=
\mu(\lambda).
\end{align*}
Thus
\[
\sum_{x\in \wt{\lambda}}\sum_{y\in B}w(x,y)=
\sum_{y\in B}\sum_{x\in \wt{\lambda}}w(x,y)\geq
\sum_{y\in B}\mu(f(y))=s\mu(\Phi).
\]

If $\wt{\lambda}$ is light then $|\lambda|\le s-1$, so there is  $z\in \widetilde{\lambda}$ satisfying \eqref{eq:solo}. Else $|\wt{\lambda}|=s$. Then either 
there is $z\in\wt{\lambda}$ satisfying \eqref{eq:solo} or both $\wt{\lambda}$ is full and $w(z)=\mu(\phi)$ for every $z\in \widetilde{\lambda}$ .
\end{proof}
\begin{lem}
\label{lem:manySolo}
Suppose $\mu:\Phi\to\{1/2,1\}$  with $\mu(\Phi)\geq b-g$, where $g\in\{0,1/2\}$, and set $$\Theta:=\{\phi\in \Phi:\mu(\phi)=1\}.$$ Then there is a solo vertex z.
If $g=0$ then 
\begin{romaninline}
\item\label{soloi}$|S_{z}|\geq b$, 
\item \label{soloii}$\|z,B\|=b+1$ and
\item \label{soloiii}$\Phi\subseteq L(z)$;
if $g=1/2$ then
\item \label{soloiv}
 $|S_{z}\cap\wt{\Theta}|+\|z,B\|\geq2b$ and
\item \label{solov}
$b-1\leq|S_{z}\cap\wt{\Theta}|\leq\|z,B\|\leq b+1$.
\end{romaninline}
\end{lem}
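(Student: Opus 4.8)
The plan is to apply Lemma~\ref{lem:w(z)} to the specific weight function $\mu:\Phi\to\{1/2,1\}$ given in the hypothesis, producing a vertex $z\in A$ with $w(z)>\mu(\Phi)\ge b-g$, and then to translate the inequality $w(z)>b-g$ into combinatorial statements about the neighbourhood of $z$ in $B$. First I would record that each summand $w(z,y)$ is nonzero only when $zy\in E$ and $f(z)\in L(y)$, in which case $w(z,y)=\mu(y)/\|y,\widetilde z\|\le \mu(y)\le 1$, since $y$ is counted by $\|y,\widetilde z\|\ge 1$. Moreover $w(z,y)=\mu(y)$ exactly when $\|y,\widetilde z\|=1$, i.e.\ when $z$ blocks $y$, i.e.\ when $y\in S_z$ — here I would also need to check that the ``full class'' side condition in the definition of a solo vertex is met; this follows because if $\widehat\Lambda$ contains a full class it must be $\widetilde z$, which we can arrange via Lemma~\ref{lem:Free}\ref{enu:light} (if $|\Lambda|>a$ there is no full class at all) and \ref{enu:plain} together with the choice of $z$ as the heavy vertex from Lemma~\ref{lem:w(z)} — in the plain case the ``moreover'' clause of Lemma~\ref{lem:w(z)} hands us exactly a $z$ in the full class or strict inequality, so $z$ is solo.

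Next, the bookkeeping: since $w(z)=\sum_{y\in N(z)\cap B}\mu(y)/\|y,\widetilde z\|$ and every term is at most $\mu(y)\le 1$, crude bounds give $w(z)\le\|z,B\|$; combined with $w(z)>b-g$ this already forces $\|z,B\|\ge b+1$ when $g=0$ (an integer strictly exceeding $b$) and $\|z,B\|\ge b$ when $g=1/2$, which with Lemma~\ref{lem:soloToA}\eqref{lta3} pins $\|z,B\|\in\{b,b+1\}$ in the latter case and $\|z,B\|=b+1$ in the former; then \ref{soloiii} follows from Lemma~\ref{lem:soloToA}\eqref{lta4}, and \ref{soloi} follows because $w(z)>b$ with $b$ unit-bounded terms forces at least... more care is needed: I would argue that each $y\notin S_z$ contributes at most $\tfrac12\mu(y)$ (since then $\|y,\widetilde z\|\ge 2$), so if $k:=\|z,B\|$ and $m:=|S_z|$ then $w(z)\le m+\tfrac12(k-m)=\tfrac12(k+m)$, whence $k+m>2(b-g)$; for $g=0$ this with $k\le b+1$ gives $m\ge b$, proving \ref{soloi}, and for $g=1/2$ it gives $k+m>2b-1$, i.e.\ $k+m\ge 2b$. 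For the $g=1/2$ statements I would instead split $B$ according to $\Theta$: terms with $y\in\widetilde\Theta$ have weight $\le 1$, terms with $y\notin\widetilde\Theta$ have $\mu(y)=\tfrac12$ hence weight $\le\tfrac12$, and a blocked $y\notin\widetilde\Theta$ contributes exactly $\tfrac12$; writing things in terms of $|S_z\cap\widetilde\Theta|$, $\|z,B\|$, and $|B\cap\widetilde\Theta|$ and using $\mu(\Phi)\ge b-\tfrac12$ I expect to land on \ref{soloiv} and the chain \ref{solov}, with $|S_z\cap\widetilde\Theta|\le\|z,B\|$ trivial and the upper bound $b+1$ from Lemma~\ref{lem:soloToA}\eqref{lta3}.

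The main obstacle I anticipate is not the weight arithmetic but correctly establishing that the heavy vertex $z$ from Lemma~\ref{lem:w(z)} is genuinely a \emph{solo} vertex in the precise sense defined (the extra clause about the full class of $\Lambda$), and handling the plain case where Lemma~\ref{lem:w(z)} only guarantees $w(z)=\mu(\Phi)$ rather than strict inequality when $\widetilde\lambda$ is full — in that borderline case every inequality above must be tight, so every $y\in N(z)\cap B$ must be blocked by $z$ and lie in $\widetilde\Theta$ or contribute its full half-weight, and I would need to check this degenerate configuration still yields the stated conclusions (indeed it gives $|S_z|\ge b$ etc.\ with equalities). The rest is a careful but routine casework on $g\in\{0,\tfrac12\}$, repeatedly invoking Lemma~\ref{lem:soloToA}.
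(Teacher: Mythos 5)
Your overall strategy coincides with the paper's: apply Lemma~\ref{lem:w(z)} to this particular $\mu$, use the dichotomy $w(z,y)=\mu(y)$ iff $y\in S_z$ versus $w(z,y)\le\mu(y)/2$ otherwise to get $w(z)\le(|S_z\cap\widetilde{\Theta}|+\|z,B\|)/2$, and combine this with Lemma~\ref{lem:soloToA}\eqref{lta3} and \eqref{lta4} to extract (i)--(v). That bookkeeping is correct, and the mild circularity of invoking Lemma~\ref{lem:soloToA}\eqref{lta3} before $z$ is known to be solo is shared with the paper and harmless, since $S_z=\emptyset$ would give $w(z)\le\|z,B\|/2\le r/2<b-g$.

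The gap is precisely in the borderline case you flag as your main obstacle. When $L$ is plain and $\widehat{\Lambda}$ contains a full class $\widetilde{\lambda}$, the definition of a solo vertex forces $z\in\widetilde{\lambda}$, and the ``moreover'' clause of Lemma~\ref{lem:w(z)} may only give $w(z)=\mu(\Phi)=b-g$. Your proposed resolution --- that all inequalities are then tight, so the conclusions hold ``with equalities'' --- is false: equality of the \emph{total} $w(z)$ with $\mu(\Phi)$ does not force each summand to be extremal. For instance, $w(z)=b$ (the case $g=0$) is compatible with $\|z,B\|=b$ and all $b$ neighbours solo, in which case (ii) fails and (iii) can no longer be deduced from Lemma~\ref{lem:soloToA}\eqref{lta4}; it is equally compatible with $\|z,B\|=b+1$ and $|S_z|=b-1$, in which case (i) fails. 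The missing idea is that this configuration cannot occur at all: if $w(x)=\mu(\Phi)\ge b-1/2\ge 6.5$ for \emph{every} $x\in\widetilde{\lambda}$, then every $x\in\widetilde{\lambda}$ is solo; but $\widetilde{\lambda}$ is full and accessible, so some $x\in\widetilde{\lambda}$ witnesses the first arc of a $\lambda,\Lambda_0$-path and is therefore movable to a class of $\widehat{\Lambda}$, contradicting Lemma~\ref{lem:soloToA}\eqref{lta1}. This contradiction restores the strict inequality $w(z)>b-g$, on which every one of (i)--(v) depends. You need to add this step; the rest of your argument then goes through as in the paper.
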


\begin{proof}
Choose $z\in A$ so that $w(z)$ is maximum
subject to the condition that if there is 
$\lambda\in\Lambda$ with $\widetilde{\lambda}$
full then $z\in\widetilde{\Lambda}.$ 
By Lemma~\ref{lem:w(z)}, $w(z)\geq \mu(\Phi)$ and if $w(z)=\mu(\Phi)$
then $\widetilde{z}$ is full and $w(x)=\mu(\Phi)$ for all $x\in\widetilde{z}$. For any
$x\in A$ and $y\in B$,  $w(x,y)=1$ 
if and only if   $y\in S_{x}\cap \wt{\Theta}$. Also if $w(x,y)\ne1$
then $w(x,y)\leq1/2$. Thus 
\begin{equation}
w(z)\leq|S_{z}|+(\|z,B\|-|S_{z}|)/2.\label{eq:=000023solo}
\end{equation}
As $w(x)\ge \mu(\Phi)\geq b-g,$ and $\|z,B\|\leq b+1$ 
by Lemma~\ref{lem:soloToA}\eqref{lta3}, \eqref{eq:=000023solo} yields
$5\leq b-1-2g\leq|S_{z}|$. Thus $z$ is solo. Moreover, if $w(z)=\mu(\Phi)$ then $\widetilde{z}$
is full and every $x\in\widetilde{z}$ is solo. But in this case, some $x\in\widetilde{z}$
is movable to a class in $\widehat{\Lambda}$, contradicting Lemma~\ref{lem:soloToA}\eqref{lta1}. Thus $w(z)>\mu(\Phi)= b-g$.

Suppose g=0. Then  $\|z,B\|>b$, and so \ref{soloii} $\|z,B\|=b+1$. Now \eqref{eq:=000023solo}
yields \ref{soloi} $|S_{z}|\geq b$, and by Lemma~\ref{lem:soloToA}\eqref{lta4}, \ref{soloiii} $\Phi\subseteq L(z)$. Else g=1/2. Using  $w(z)> b-1/2$, we have $b\le\|z,B\|\le b+1$. In the first case
  \ref{soloiv} holds, and in the second case \ref{solov} holds.
\end{proof}

\begin{lem}
\label{lem:S^*_z}If $z\in A$ and $|S_{z}|\geq5$ then $|S_{z}^{*}|\geq|S_{z}|-1$.
\end{lem}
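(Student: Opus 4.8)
\textbf{Proof proposal for Lemma~\ref{lem:S^*_z}.}

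The plan is to prove the equivalent statement that $|S_{z}\smallsetminus S_{z}^{*}|\le 1$, i.e. at most one solo neighbor of $z$ fails to be useful. By definition, a vertex $y\in S_{z}$ is \emph{not} useful precisely when $yy'\in E$ for every $y'\in S_{z}-y$; in other words, the non-useful members of $S_{z}$ are exactly the vertices that are $G$-adjacent to all other vertices of $S_{z}$. So it suffices to show that $G[S_z]$ has at most one such ``universal'' vertex.

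First I would assume, toward a contradiction, that there are two distinct vertices $y_{1},y_{2}\in S_{z}\smallsetminus S_{z}^{*}$. Since $|S_{z}|\ge 5$, the set $S_{z}\smallsetminus\{y_{1},y_{2}\}$ has at least three elements, so I can choose distinct $y_{3},y_{4},y_{5}\in S_{z}\smallsetminus\{y_{1},y_{2}\}$. Being universal in $G[S_{z}]$, each of $y_{1},y_{2}$ is adjacent to each of $y_{3},y_{4},y_{5}$. Moreover every solo neighbor of $z$ is, in particular, a neighbor of $z$, so $z$ is adjacent to each of $y_{3},y_{4},y_{5}$. Finally, $z\in A$ while $S_{z}\subseteq B$, and $A=\widetilde\Lambda$ and $B=\widetilde\Phi$ are disjoint, so the six vertices $z,y_{1},y_{2},y_{3},y_{4},y_{5}$ are pairwise distinct. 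Hence $\{z,y_{1},y_{2}\}$ and $\{y_{3},y_{4},y_{5}\}$ span a copy of $K_{3,3}$ in $G$, contradicting Lemma~\ref{lem:BB}(a). This contradiction gives $|S_{z}\smallsetminus S_{z}^{*}|\le1$, which is exactly $|S_{z}^{*}|\ge|S_{z}|-1$.

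The argument is short and essentially free of computation; the only points that need care are (i) translating ``not useful'' into the adjacency statement ``adjacent to all other vertices of $S_{z}$'', and (ii) verifying that the six chosen vertices are genuinely distinct (which is where $z\in A$, $S_{z}\subseteq B$, and the disjointness of $A$ and $B$ are used, together with $|S_{z}|\ge 5$ to supply three vertices outside $\{y_1,y_2\}$). I do not expect any real obstacle. (One could alternatively derive a contradiction directly from \eqref{eq:bibound} by taking $X$ to be the set of universal vertices and $Y=\{z\}\cup(S_z\smallsetminus X)$; this works cleanly when $X$ has at least three vertices but only yields equality when $|X|=2$, so the $K_{3,3}$-free route is preferable as it handles all cases at once.)
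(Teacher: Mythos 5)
Your proof is correct and is essentially the paper's own argument: two non-useful solo neighbors are each adjacent to all other members of $S_{z}$, so together with $z$ and three further solo neighbors they yield a $K_{3,3}$ subgraph, contradicting Lemma~\ref{lem:BB}(a). Your write-up is in fact more careful than the paper's one-line version, spelling out the distinctness of the six vertices via $z\in A$, $S_{z}\subseteq B$.
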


\begin{proof}
Otherwise there are 
 distinct $y,y'\in S_{z}$ that are not useful. Then $S\cup\{y,y',z\}$ induces a supergraph of $K_{3,3}$, contradicting Lemma~\ref{lem:BB}(a). 
\end{proof}

\subsection*{Finishing the proof of Theorem~\ref{thm:Main2}}
We will consider two cases. 
In each case, we will show that for some $z_0,\Theta_0$ 
the string $(z_0,\Theta_0,\Upsilon,\Upsilon')$ 
witnesses that $f$ is extreme, 
where
\[
\Upsilon:=\{\phi\in\Phi:\|z_0,\widetilde{\phi}\cap 
S_{z_0}^{*}\|\geq1\}~\text{and}~\Upsilon':=
\{\phi\in\Phi\smallsetminus\Upsilon:
\|z_0,\widetilde{\phi}\|\geq1\}.
\]
Then by 
Lemma~\ref{lem:ext} $G$ will have an SE $L$-coloring.

In both cases $(z_0,\Theta_0)$ will satisfy:
\begin{Cenum}
\item \label{g1}
$\Theta_0$ is a sink with 
$|\Theta_0|\le2$, ~
\item \label{g2}
$z_0$ is a solo vertex with $|S_{z_0}|\ge7$, and ~ 
\item \label{g3}
if $z_0$ is movable to $\wt{\phi}\in\wh\Phi$ then $\phi\in \Theta_0$.
\end{Cenum}

By \ref{g3}, 
\begin{romaninline}
\item \label{enu:A}$\Phi\cap L(z_0) \subseteq\Theta_0\cup\Upsilon\cup\Upsilon'\subseteq \Phi$. By Lemma~\ref{lem:Useful}, 
\item \label{enu:B} if $\upsilon\in\Upsilon$ then $2\leq\|z_0,\widetilde{\upsilon}\|$.
Also 
\item \label{enu:C}if $\upsilon\in\Upsilon'$ then $1\leq\|z_0,\widetilde{\upsilon}\smallsetminus S_{z_0}^{*}\|$. 
 By Lemma~\ref{lem:S^*_z}, \item \label{enu:D}
 $|\Upsilon'|\le \|z_0,B\|-|S^*_{z_0}|\le\\ \|z_0,B\|-|S_{z_0}|+1$.
\end{romaninline}
Using Lemma~\ref{lem:soloToA}(c) and combining
\ref{enu:A}, \ref{enu:B} and \ref{enu:C}, 
\begin{align}\label{eq:main}
\|z_0,B\|-1-|\Theta_0|+|\Upsilon|&\le
|\Phi\cap L(z_0)|-|\Theta_0|+|\Upsilon| \notag\\ &\leq2|\Upsilon|+|\Upsilon'|\leq
\sum_{\upsilon\in\Upsilon\cup\Upsilon'}\|z_0,\widetilde{v}\|
\leq\|z_0,B\|.
\end{align}
 Hence
 \begin{romaninline}[resume]
    \item \label{enu:E}
    $|\Upsilon|\le|\Theta_0|+1$.
\end{romaninline}
By \ref{enu:A}, \ref{enu:D} and (C1,2):
\begin{align}\label{S_z}
    \|z_0,B\|-1&\le |\Phi\cap L(z)|\le|\Theta_0|+|\Upsilon|+|\Upsilon'|
    \le2+3+\|z_0,B\|-|S_{z_0}|+1\\\label{S_z2}
     7\le|S_{z_0}|&\le7.\notag
\end{align}
Now equality holds throughout  \eqref{S_z}. So $|\Theta_0|=2$ and $|\Upsilon|=3$. Thus equality holds throughout \eqref{eq:main}. So \ref{enu:E2} and \ref{enu:E3} will hold.
For \ref{enu:E1}, consider $\phi\in\Phi\smallsetminus\Theta_0$. By Lemma~\ref{lem:S*capD}, $\Upsilon\cap\Theta_0=\emptyset$.
Set $\Psi':=\Theta_0+\phi$ and $\Theta':=\cl^{+}(\Psi')$. As 
 $\Theta'$ 
is a sink with 
 $|\Theta'|\geq3$, Lemma~\ref{lem:BB}(ci) 
 implies $|\Theta'|\ge r-2$. As $|\wt{\Upsilon}|=3$, 
 there is 
 $\upsilon\in\Upsilon\smallsetminus\Theta_0$ that is reachable from $\Theta'$. As $\Theta_0$ is a sink, $\upsilon$ is not 
  reachable from $\Theta_0$. So $\upsilon$ is reachable from $\phi$. 

Now it remains to choose $(z_0,\Theta_0)$ satisfying (C1--C3) and to check that \ref{enu:E0}  holds for this choice. By Lemma~\ref{lem:manySolo}, there is a solo vertex $z$ with $|S_{z}|\geq b$, $\|z,B\|=b+1$
and $\Phi\subseteq L(z)$. Let $\Psi$ be the set of $\beta\in \Phi$ with $z$ movable to
$\widetilde{\beta}$, and put $\Theta:=\cl^{+}(\Psi)$. 
By Lemma~\ref{lem:BB}(ci), $|\Theta|\leq2$ or  
$|\Theta|\geq r-2$.

\medskip\noindent
\emph{Case 1:} $|\Theta|\leq2$. Set $z_0:=z$ and 
$\Theta_0:=\Theta$. By the case,  \ref{g1} holds. 
 As $|S_z|\ge b\ge7$, \ref{g2} holds.  
By the definition of $\Theta$, \ref{g3} holds.
As $\Phi\subseteq L(z)$,  \ref{enu:E0} holds.

\medskip\noindent
\emph{Case 2: $|\Theta|\geq r-2$}.  
By Lemma~\ref{lem:S*capD},
$S_{z}^{*}\subseteq B\smallsetminus\wt\Theta$,  so $|\Phi\smallsetminus \Theta|\geq1$ and $s\ge6$. 
Thus $|\Theta|=r-2,$ $|\Phi\smallsetminus \Theta|=1$, $b=r-1$ and  $a=1$. Put $\Phi\smallsetminus \Theta=:\{\gamma_{0}\}$, and set
\begin{align*}
\mu(\beta):=\begin{cases}
1, & \text{if }\beta\in \Theta\\
1/2, & \text{if }\beta=\gamma_{0}
\end{cases}.
\end{align*}
By Lemma~\ref{lem:manySolo}
(iv,v), there is a solo vertex $z'$ with $w(z')> b-1/2$ and
\begin{equation}\label{eq:z'}
    |S_{z'}\cap\Theta|+\|z',B\|\geq2b~ \text{ and }~
b-1\leq|S_{z'}\cap\Theta|\leq\|z',B\|\leq b+1. 
\end{equation}
Put $\Psi':=\{\beta\in \Phi: z' \textrm{ movable to }
\widetilde{\beta}\}$ and $\Theta':=\cl^{+}(\Psi')$. Now set $z_0:=z'$ and $\Theta_0:=\Theta'$.
Clearly (C1--C3) hold. 

We claim that $\gamma_0\notin \Psi'$. Else move $z'$ to $\widetilde{\gamma}_{0}$\,
($\widetilde{\gamma}_{0}+z'$ is  overfull). As in the proof of Lemma~\ref{lem:Useful}, since $K_{3,3}\nsubseteq G$, 
there is at most one $y_{0}'\in S_{z'}^{*}\cap \wt\Theta$ with 
 $\|y'_{0},S^{*}_{z}\|\ge|S^{*}_{z}|-1$. Thus 
there are $y'\in S_{z'}^{*}\cap \wt\Theta$ and $y_{1},y_{2}\in  S_{z}^{*}$ 
with $y_{1},y_{2}\notin N(y')$.
Move $y'$ to $\widetilde{z}'-z'$ \,($\widetilde{y}'-y'$ is light). Let $P=\beta_{1}\dots\beta_{k}$
be a $\Psi,f(y')$-path in $H$. Move $z$ to $\widetilde{\beta}_{1}$ and shift
witnesses along $P$ \,(now all classes in $\widehat{\Theta}$ are full). Finally, move
$y_1$ to $\widetilde{z}$ \,($\widetilde{\gamma}_{0}+z'-y_{1}$ is full). 
 Since $y_{2}$ is movable to $\wt z-z+y_{1}$  (even if $\wt z=\wt z'$),
this yields a new
SE $L$-coloring $f'$ with 
$A\cup\widetilde{\gamma}_{0}-z\subseteq\widetilde{\Lambda}(f')$,
contradicting \ref{enu:1}.

As $\gamma_0\notin\Psi'$,  
either $\gamma_{0}\notin L(z')$ or 
$\|z',\widetilde{\gamma}_0\|\geq1$. 
If $\gamma_{0}\notin L(z')$ 
then $|L(z')\cap\Phi|\le b-1$, so 
$\|z',B\|\le b$ by Lemma~\ref{lem:soloToA}\eqref{lta3}.  
By \eqref{eq:z'}, 
$|S_{z'}\cap\wt{\Theta}|=b$, 
so $N(z')\cap B\subseteq \wt{\Theta}$ and 
$|L(z')\cap\Phi|\ge b-1$.
 Thus $L(z')\cap \Phi=\Theta$, and 
\ref{enu:E0} holds.
Else there is $y\in N(z')\cap\widetilde{\gamma}_0$.
Since $w(z',y)\le 1/2$ 
and $w(z')\ge b$, $\|z',B\|= b+1$. 
Thus $\Phi\subseteq L(z')$, and \ref{enu:E0} holds.

We have shown that  some sequence, $(z_0,\Theta_0,\Upsilon,\Upsilon')$, witnesses that $f$ is extreme. 
By Lemma~\ref{lem:ext}, 
$G$ has an SE $L$-coloring, 
completing the proof of  Theorem~\ref{thm:Main2}.

\section{Concluding remarks} 

1. Our Theorem~\ref{thm:Main} establishes a statement implying the validity of both, Conjecture~\ref{conj:CLW}  and
Conjecture~\ref{conj:KPW},  for planar graphs with maximum degree at least $9$. This suggests that possibly the following is true.

\begin{conjecture}\label{conj:KKX}
If $G$ is an $r$-colorable graph with $\Delta(G)\leq r$ then either $r$ is odd and $K_{r,r}\subseteq G$ or
$G$ is equitably
$r$-choosable, and even SE $r$-choosable.  
\end{conjecture}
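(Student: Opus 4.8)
The natural plan is to run the induction on $|G|$ from Section~2 essentially verbatim, now for an arbitrary $r$-colorable graph $G$ with $\Delta(G)\le r$ and an arbitrary $r$-list assignment $L$. If $K_{r,r}\subseteq G$ and $r$ is odd there is nothing to prove, so assume $G$ is not exceptional; then delete a minimum-degree vertex $p$ and SE-$L$-color $G':=G-p$ by induction ($G'$ is $r$-colorable with $\Delta(G')\le r$, and is not exceptional since $G$ is not, so the inductive hypothesis yields an SE $L$-coloring $f$ of $G'$). Build the color digraph $H=H(f)$, split $\Gamma$ into the accessible colors $\Lambda$ and the remaining colors $\Phi$, and note that if $p$ is movable to an accessible class we are done. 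Lemma~\ref{pr-1} carries over unchanged, since it never uses $G\in\BB$; so under \eqref{pnotmove} we again have $a\le d(p)$ and every $y\in B$ immovable to an accessible class, and the ``solo vertex'' machinery together with Lemmas~\ref{Af'}, \ref{lem:Useful}, \ref{lem:S*capD} and \ref{lem:ext} stays valid, as it too is independent of \eqref{eq:bibound}.

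The place where a proof of the general conjecture must part from the present one is Lemma~\ref{lem:BB}. Its three conclusions --- $K_{3,3}\nsubseteq G$, $\delta(G)\le6$, and the sink dichotomy $|\Theta|\le2$ or $|\Theta|\ge r-2$ (hence $b\ge r-2$, $a\le2$) --- are exactly what drives Lemmas~\ref{lem:Free} through~\ref{lem:manySolo}: the bound $a\le2$ collapses the family of light classes and produces the small sinks exploited in Lemmas~\ref{lem:soloToA} and~\ref{lem:Useful}, while $b\ge r-2\ge7$ is what converts the averaging in Lemma~\ref{lem:w(z)} into a solo vertex with $|S_z|\ge b$ and, in the final case analysis, into the rigid chain $7\le|S_{z_0}|\le7$. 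A general $r$-colorable graph offers only $K_{r,r}\nsubseteq G$ (or $r$ even) and $\Delta(G)\le r$: there is no useful lower bound on $\delta(G)$, so $a$ may be as large as $\delta(G)$, and $\Phi$ may fall in the range $3\le|\Phi|\le r-3$ that the $\BB$ dichotomy excludes. The first real task is therefore to rebuild the weighting step without these crutches --- presumably by averaging over the colors of $\Lambda$ rather than over $A$, tracking how the deficiency $as-|A|$ is distributed among the classes of $\widehat{\Lambda}$, and simultaneously controlling the multiplicities $\|z,\wt{x}\|$ and the adjacencies among the solo neighbors of the resulting vertex $z$. I expect this to demand genuinely new combinatorial input, not a mechanical strengthening of Lemma~\ref{lem:w(z)}.

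The second obstacle is, I believe, the decisive one: the exceptional family. In the $\BB$ setting Lemma~\ref{lem:BB}(a) serves as a clean obstruction --- whenever a rotation threatens to create too many pairwise nonadjacent solo neighbors, one produces a $K_{3,3}$ and stops --- but in the general conjecture the only permitted obstruction is a copy of $K_{r,r}$, and only when $r$ is odd. Every ``stuck'' configuration (a solo vertex whose solo neighbors cannot be rearranged; a sink $\Phi$ from which nothing can be released; a vertex $p$ with $d(p)=a$ meeting one vertex of each accessible class) must therefore be shown either to admit a recoloring or to contain a $K_{r,r}$ spanned by $A$-side and $B$-side classes. Pinning down exactly which dead ends force $K_{r,r}$ --- and ruling out the rest --- is essentially the content of the Chen--Lih--Wu Conjecture, which is still open; and since Conjecture~\ref{conj:KKX} reduces to Chen--Lih--Wu when $L$ is plain, any complete proof of it subsumes that conjecture. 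A realistic intermediate target is thus to prove Conjecture~\ref{conj:KKX} conditionally on Chen--Lih--Wu, or unconditionally for sparse classes (bounded genus, bounded degeneracy) where a degree bound and a sink dichotomy survive once the constant $4$ in \eqref{eq:bibound} is replaced by the value appropriate to the class and the $\BB$-style argument is re-run --- the present paper already carrying this out for the extremal such class.
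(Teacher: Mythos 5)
The statement you were given is Conjecture~\ref{conj:KKX}, which the paper states in its concluding remarks as an \emph{open problem}; the paper contains no proof of it, and none is claimed. Your submission, accordingly, is not a proof but a (largely accurate) diagnosis of why one is out of reach: you correctly note that for a plain list assignment an SE $L$-coloring is an equitable $r$-coloring, so Conjecture~\ref{conj:KKX} subsumes the Chen--Lih--Wu Conjecture~\ref{conj:CLW}, which remains open in general; and you correctly locate the point where the paper's argument is tied to the class $\BB$, namely Lemma~\ref{lem:BB} (the $K_{3,3}$-freeness, the bound $\delta(G)\le 6$, and the sink dichotomy forcing $a\le2$ and $b\ge r-2$), whose consequences propagate through Lemmas~\ref{lem:Free}--\ref{lem:S^*_z} and the final case analysis. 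Since no substitute for these facts is supplied for general $r$-colorable graphs, and since identifying when a stuck configuration forces $K_{r,r}$ is precisely the unresolved content of Conjecture~\ref{conj:CLW}, there is no argument here to verify against the paper --- only a research program, which you state candidly as such.

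One small caution on the part of your sketch that is concrete: even the inductive set-up is not automatic in the general setting. You delete a minimum-degree vertex $p$ and invoke the conjecture for $G'=G-p$, but $G'$ could contain $K_{r,r}$ as a \emph{proper} subgraph even though the relevant exceptional case is $G$ itself being built around such a copy; more importantly, the reduction ``$p$ is movable to an accessible class, else work harder'' relies on $r-d(p)\ge 3$ (via $\delta(G)\le6\le r-3$), which fails once $\delta(G)$ can be as large as $r$. So even the skeleton you propose to ``run verbatim'' would need repair before the genuinely new combinatorial input you describe. In short: correct assessment of the difficulty, no proof, and the paper offers none either.
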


It might be interesting to determine whether Conjecture~\ref{conj:KKX}  (and Conjecture~\ref{conj:CLW}) holds for all planar graphs. Also one could check whether Conjecture~\ref{conj:KKX} holds in special cases (e.g. $r\le4$ or $r\ge n/4$) for which Conjecture~\ref{conj:CLW} is known to be true.  

2. Our Theorem~\ref{thm:Main2} deals with 
 class $\BB$ which topologically is much broader than the class of  planar graphs. For example, for any graph $H$, the graph $B_H$ obtained from $H$ by subdividing each edge once is in $\BB$. 
Therefore, every graph is a minor of a graph in $\BB$.
In particular, the acyclic chromatic number of graphs in $\BB$ can be arbitrarily large, while it is at most $5$ for any planar graph.

So, proving  
Conjecture~\ref{conj:KKX} for all graphs in $\BB$ would be more interesting than proving it merely for planar graphs.

Another way to extend results on equitable coloring of planar graphs is to consider graphs embeddable into other surfaces. For example, it was proved in~\cite{KLX} that for each $r\geq 9$ every graph $G$ embeddable into torus or the M\" obius strip with $\Delta(G)\leq r$ has an equitable $r$-coloring.

3. In~\cite{KK5}, the following refinement of the Hajnal-Szemer\' edi Theorem was proved.

\begin{thm}\label{thKK} For any $r\geq 1$, if for every edge $xy$ in a 
 graph $G$,  $d(x)+d(y)\geq 2r+1$, then $G$ 
 has an equitable $r+1$ coloring.
\end{thm}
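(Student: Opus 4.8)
The plan is to prove Theorem~\ref{thKK} by exactly the minimal-counterexample machinery that drives Theorem~\ref{thm:Main2}: argue by induction on $|G|$, delete a carefully chosen low-degree vertex $p$, equitably $(r+1)$-colour what remains, and then re-insert $p$ by relocating witnesses along a directed path in an auxiliary digraph on the colour classes. The point is that the edge degree-sum hypothesis $d(x)+d(y)\ge 2r+1$ takes over the role played by the global bound $\Delta(G)\le r$ in the Hajnal--Szemer\'edi theorem, so once the combinatorial framework is in place the entire burden shifts onto a local counting (discharging) argument governed by this hypothesis.

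\textbf{Framework and insertion step.} First I would perform the routine reduction to the balanced case, so that an equitable $(r+1)$-colouring means every colour class has the same size $s$; the reduction is arranged to preserve the degree-sum condition on every edge. Then, mirroring the set-up above, I fix a vertex $p$ of minimum degree, set $G'=G-p$, and invoke the inductive hypothesis to obtain an equitable colouring $f$ of $G'$, which realises $p$ as the lone extra vertex of an oversized class. Next I build the auxiliary digraph $H$ on the colour set, with an arc $\alpha\beta$ witnessed by a vertex $x\in\widetilde\alpha$ movable to $\widetilde\beta$, together with the notion of accessible classes. The goal of the insertion step is to find a colour class to which $p$ can be moved that reaches, through a directed path in $H$, an undersized class; shifting witnesses along that path then equalises the class sizes and produces the desired equitable colouring.

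\textbf{The discharging core (main obstacle).} The insertion succeeds unless $p$ is blocked, i.e.\ every colour admissible at $p$ leads in $H$ only into a sink consisting of full classes, in which case one reaches, as in the \emph{solo vertex} analysis of Lemmas~\ref{lem:soloToA}--\ref{lem:S^*_z}, a rigid local configuration around $p$ and the vertices being relocated. The hard part---and the technical heart of the whole proof---is to rule this configuration out by discharging: assign initial charge to the vertices of the oversized class (or to the colour classes), redistribute it along the arcs of $H$ and across the edges incident to the relocated witnesses, and then force a contradiction in the total charge. The only quantitative input available is $d(x)+d(y)\ge 2r+1$, so the counting must be carried out edge by edge, summing this quantity over the edges joining $p$ and the witnesses to the blocking classes. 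I expect this discharging step to be the main difficulty: unlike the uniform bound $\Delta(G)\le r$, the degree-sum hypothesis lets individual degrees vary widely, so the charge must be tracked at the level of single edges rather than single vertices, and the rules have to be tuned to interact correctly with the useful-solo-neighbour bookkeeping that controls how many classes can block $p$ at once.
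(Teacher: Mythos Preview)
The paper does not contain a proof of Theorem~\ref{thKK}. The theorem appears only in the concluding remarks (Section~4, item~3), where it is quoted as a result of~\cite{KK5}; the surrounding text even raises as open problems whether a list analogue of Theorem~\ref{thKK} holds for planar graphs or for graphs in~$\BB$. So there is nothing in this paper to compare your proposal against.

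Two further remarks. First, the inequality in the stated hypothesis is a typo: as a ``refinement of the Hajnal--Szemer\'edi Theorem'' the condition must read $d(x)+d(y)\le 2r+1$ for every edge $xy$, an Ore-type relaxation of $\Delta(G)\le r$. With the $\ge$ sign the statement is false (e.g.\ $K_{2r+2}$). Your write-up inherits this typo and in one place even says the degree-sum hypothesis ``takes over the role played by the global bound $\Delta(G)\le r$'', which only makes sense for the $\le$ version.

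Second, what you have is a plan rather than a proof, and the plan leans heavily on the machinery of Theorem~\ref{thm:Main2} without checking that it survives the change of hypothesis. The crucial structural inputs there---Lemma~\ref{lem:BB}(c) on sinks and the $K_{3,3}$-freeness used in Lemma~\ref{lem:S^*_z}---come from the bipartite edge bound~\eqref{eq:bibound}, not from any degree condition, and they have no analogue under an Ore-type hypothesis on a general graph. The actual proof in~\cite{KK5} does proceed by deleting a low-degree vertex and using an auxiliary digraph on colour classes, but the obstruction analysis is substantially different from the ``extreme $f$'' argument here and does not reduce to a discharging computation of the kind you sketch.
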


It would be interesting to prove a list analogue of Theorem~\ref{thKK} for wide classes of graphs. Is this analogue true for planar graphs? For graphs in $\BB$?

4. Recall that one can extract from our proof of Theorem~\ref{thm:Main2} a polynomial-time algorithm
that produces for every
 graph $G\in \BB$  with $\Delta(G)\leq r$ and every $(r + 1)$-list $L$
 for $G$ an equitable $L$-coloring of $G$.
It is also known (see, e.g.~\cite{KKMS}) that there is a polynomial-time algorithm for finding an equitable $(r+1)$-coloring of an arbitrary graph with maximum degree at most $r$. But we do not know a similar algorithmic analogue of 
Theorem~\ref{thKK}.

\end{document}